\documentclass[twocolumn,amsthm]{autart}    

\usepackage[utf8]{inputenc}             \usepackage[T1]{fontenc}                \usepackage{url}                        \usepackage{booktabs}                   \usepackage{nicefrac}                   

\usepackage{tikz}       

\usepackage{mathtools}

\usepackage{xparse}                 

\usepackage{amsfonts}                   

\usepackage[bookmarks=true]{hyperref}   

\makeatletter \let\cl@part\relax \makeatother
\usepackage[capitalise]{cleveref}

\usepackage{csquotes}

 \def\diffd{\mathrm{d}}

\DeclareDocumentCommand\differential{ o g d() }{ \IfNoValueTF{#2}{
		\IfNoValueTF{#3}
			{\diffd\IfNoValueTF{#1}{}{^{#1}}}
			{\mathinner{\diffd\IfNoValueTF{#1}{}{^{#1}}\argopen(#3\argclose)}}
		}
		{\mathinner{\diffd\IfNoValueTF{#1}{}{^{#1}}#2} \IfNoValueTF{#3}{}{(#3)}}
	}
\DeclareDocumentCommand\dd{}{\differential} 

\newcommand{\T}{^\mathsf{T}}

\DeclareMathOperator{\Tr}{\mathbf{tr}}
\DeclareMathOperator{\interior}{int}
\DeclareMathOperator{\E}{\mathbb{E}}

\DeclarePairedDelimiter\abs{\lvert}{\rvert}

\newcommand*{\XSet}{\mathcal{X}}
\newcommand*{\USet}{\mathcal{U}}

\newcommand{\lft}{\mathopen{}\mathclose\bgroup\left}
\newcommand{\rht}{\aftergroup\egroup\right} 
\newcommand*\circled[1]{\tikz[baseline=(char.base)]{
    \node[shape=circle,draw,inner sep=1pt] (char) {#1};}}

\begin{document}
\begin{frontmatter}

\title{Almost-Sure Safety Guarantees of Stochastic Zero-Control Barrier Functions Do Not Hold\thanksref{footnoteinfo}} 

\thanks[footnoteinfo]{Corresponding author O.~So}

\author[MIT]{Oswin So}\ead{oswinso@mit.edu},    \author[WashU]{Andrew Clark}\ead{andrewclark@wustl.edu},               \author[MIT]{Chuchu Fan}\ead{chuchu@mit.edu}  

\address[MIT]{Massachusetts Institute of Technology, United States}  \address[WashU]{Washington University in St. Louis, United States}             

\begin{keyword}                                         
Stochastic control, Synthesis of Stochastic Systems.
\end{keyword}

\begin{abstract}                          The 2021 paper ``Control barrier functions for stochastic systems'' provides theorems that give almost sure safety guarantees given stochastic zero control barrier function (ZCBF). Unfortunately, both the theorem and its proof is invalid. In this letter, we illustrate on a toy example that the almost sure safety guarantees for stochastic ZCBF do not hold and explain why the proof is flawed.
Although stochastic reciprocal barrier functions (RCBF) also uses the same proof technique, we provide a different proof technique that verifies that stochastic RCBFs are indeed safe with probability one.
Using the RCBF, we derive a modified ZCBF condition that guarantees safety with probability one.
Finally, we provide some discussion on the role of unbounded controls in the almost-sure safety guarantees of RCBFs, and show that the rate of divergence of the ratio of the drift and diffusion is the key for whether a system has almost sure safety guarantees.

\end{abstract}

\end{frontmatter}
\endNoHyper

\section{Introduction}
The paper \cite{clark2021control} presented a framework for generalizing Control Barrier Functions (CBF) from deterministic systems to \textit{stochastic systems} that guarantees safety with probability $1$. In contrast to many works examining safety for stochastic systems in continuous time that give results for \textit{finite-time} safety \cite{prajna2004stochastic,prajna2007framework,wang2021safety,black2023safety,abel2023prescribed}, the paper \cite{clark2021control} stands out in that it provides a way to guarantee safety \textit{with probability $1$} for \textit{all time}. Analogous to the deterministic case \cite{ames2016control}, the paper \cite{clark2021control} constructs stochastic versions of the reciprocal CBF (RCBF) and the zero-CBF (ZCBF). The almost-sure safety guarantees of the stochastic ZCBF has been used in recent works (\cite{pereira2021safe,song2022generalization,vahs2023non,vahs2023risk,enwerem2023safe}). However, as we show in this letter, these almost-sure safety guarantees are incorrect. Interestingly, the same SZCBF has been used in \cite{wang2021safety}, where only finite-time safety guarantees are invoked.

In this letter, we reveal that both the theorem and the proof of almost-sure safety of stochastic zero-CBF (ZCBF) in \cite[Theorem 3]{clark2021control} are incorrect via a simple counterexample.
Next, we examine the implications of this error on the almost-sure safety of stochastic Reciprocal CBFs, which use the same proof technique. In particular, the same counterexamples for ZCBFs are not valid here, and it remains to be seen whether the almost-sure safety of stochastic RCBFs can be proven using the original proof technique.

\section{Theorem Statement and Illustration on Simple Examples}
We consider the following stochastic control system with respect to a filtered probability space $(\Omega, \mathcal{F}, \{\mathcal{F}_t\}_{t \geq 0}, \mathbb{P})$ satisfying the usual conditions \cite{karatzas2012brownian}, described by the following stochastic differential equation (SDE) with control-affine drift
\begin{equation} \label{eq:sde}
    \dd{x}_t = \Big( f(x_t) + g(x_t) u_t \Big) \dd{t} + \sigma(x_t) \dd{W}_t
\end{equation}
for states $x_t \in \XSet \subseteq \mathbb{R}^{n_x}$ and $u_t \in \USet \subseteq \mathbb{R}^{n_u}$.

\begin{defn}[Zero-CBF]
    The function $h : \XSet \to \mathbb{R}$ is a zero-CBF for a system described by the SDE \eqref{eq:sde} if for all $x \in \XSet$ satisfying $h(x) > 0$, there exists a $u \in \USet$ satisfying
    \begin{equation} \label{eq:zcbf}
        \frac{\partial h}{\partial x} \Big( f(x) + g(x) u\Big) + \frac{1}{2} \Tr\left(\sigma\T \frac{\partial^2 h}{\partial x^2} \sigma\right) \geq -h(x).
    \end{equation}
\end{defn}

\begin{thm}[{{\cite[Theorem 3]{clark2021control}}}] \label{thm:zcbf_wp1}
Suppose that there exists an ZCBF $h$ for a controlled stochastic process $x_t$ described by \eqref{eq:sde}, and at each time $t$, $u_t$ satisfies \eqref{eq:zcbf}. Then $\Pr(x_t \in C, \forall t\geq0) = 1$, provided that $x_0 \in C$.
\end{thm}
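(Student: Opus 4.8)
The plan is to transplant the deterministic comparison-function proof of CBF safety into the stochastic setting by building a submartingale out of $h$. Writing $C := \{x \in \XSet : h(x) \geq 0\}$, so that its interior is $\{h > 0\}$ and its boundary is $\{h = 0\}$, I would first apply It\^o's formula to $h(x_t)$ along solutions of \eqref{eq:sde}, obtaining
\begin{equation*}
    \dd{h}(x_t) = \mathcal{L}h(x_t)\,\dd{t} + \frac{\partial h}{\partial x}\sigma(x_t)\,\dd{W}_t,
\end{equation*}
where $\mathcal{L}h(x_t)$ denotes the controlled infinitesimal generator, i.e.\ the left-hand side of \eqref{eq:zcbf}. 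Since $u_t$ satisfies \eqref{eq:zcbf} at every time by hypothesis, this immediately yields the pathwise drift bound $\mathcal{L}h(x_t) \geq -h(x_t)$.

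The second step is to cancel the exponential decay that this bound permits. Following the deterministic template, where $\dot h \geq -h$ forces $h(t) \geq h(0)e^{-t}$, I would study the rescaled process $Y_t := e^{t} h(x_t)$. It\^o's product rule gives
\begin{equation*}
    \dd{Y}_t = e^{t}\big(h(x_t) + \mathcal{L}h(x_t)\big)\,\dd{t} + e^{t}\frac{\partial h}{\partial x}\sigma(x_t)\,\dd{W}_t,
\end{equation*}
whose drift is nonnegative by the bound above, so $Y_t$ is a local submartingale. Introducing the exit time $\tau := \inf\{t \geq 0 : h(x_t) < 0\}$ and localizing, I would work with the stopped process $Y_{t \wedge \tau}$ and attempt to show $\Pr(\tau < \infty) = 0$, which is equivalent to the claimed conclusion $\Pr(x_t \in C,\ \forall t \geq 0) = 1$.

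The final step --- converting the submartingale property into pathwise nonnegativity of $h$ --- is where I expect the whole argument to stand or fall, and it is the main obstacle. In the deterministic case the flow is pinned above $h(0)e^{-t} > 0$, so $h$ simply cannot reach the boundary; here there is no such pathwise floor. A submartingale controls only conditional expectations, and the nonnegative drift does nothing to stop the martingale part $e^{t}\frac{\partial h}{\partial x}\sigma\,\dd{W}_t$ from carrying $h(x_t)$ strictly below zero on an event of positive probability. Crucially, the diffusion coefficient $\frac{\partial h}{\partial x}\sigma$ need not vanish as $h \to 0^{+}$, so Brownian fluctuations remain active right up to the boundary $\{h = 0\}$. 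Ruling out a crossing would therefore require a boundary-nondegeneracy or noise-vanishing hypothesis that the ZCBF condition \eqref{eq:zcbf} alone does not supply, and I anticipate that no such argument can be completed --- consistent with the stated purpose of this letter.
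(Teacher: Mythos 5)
Your instinct at the final step is exactly right, and you should have trusted it to the end: \Cref{thm:zcbf_wp1} is \emph{false}, and this letter's treatment of it is a refutation, not a proof, so the correct completion of your proposal is a counterexample rather than a missing lemma. The paper supplies the simplest possible one: take $f = 0$, $g = 0$, $\sigma = 1$, $h(x) = x$, $C = \mathbb{R}_{\geq 0}$. Then $x_t = W_t$, the ZCBF condition \eqref{eq:zcbf} reduces on $\{h > 0\}$ to the vacuous statement $0 \geq -h(x)$, and yet $\Pr(W_t \geq 0,\ \forall t \geq 0) \neq 1$ (indeed it is zero, by recurrence of one-dimensional Brownian motion). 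Note that this example instantiates precisely the failure mode you identified: the diffusion coefficient $\frac{\partial h}{\partial x}\sigma \equiv 1$ is nondegenerate at the boundary $\{h = 0\}$, and nothing in \eqref{eq:zcbf} forces it to vanish there. Your submartingale computation is correct as far as it goes, but it only yields expectation-level statements such as $\E[h(x_{t \wedge \tau})] \geq e^{-t} h(x_0)$ after localization, and these are fully compatible with $h$ crossing zero on an event of positive probability --- which is why condition \eqref{eq:zcbf} supports the \emph{finite-time} probabilistic guarantees found elsewhere in the literature, but not almost-sure invariance. Almost-sure safety requires the drift to blow up near the boundary, as in the letter's modified condition \eqref{eq:zcbf_like_ineq} with its correction term $-\tilde{\sigma}^2/h(x)$, or more generally a divergence rate of $\tilde{\mu}/\tilde{\sigma}^2$ of at least $h^{-1}$ (\Cref{lem:unbounded_not_enough}).

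For completeness, the original (invalid) proof in the cited work does not take your route. It defines alternating stopping times $\eta_i, \zeta_i$ at the level $\theta \in (0, h(x_0)]$ and argues by induction over the intervals $[\eta_i, \zeta_i]$ and $[\zeta_i, \eta_{i+1}]$ that a comparison process $U_t$ with $U_t \leq \theta$ lower-bounds $h(x_t)$. The flaw the letter isolates is that this induction establishes the property only on the union of those intervals, which covers $[0,\infty)$ only if $\eta_i, \zeta_i \to \infty$; in the Brownian counterexample, Blumenthal's 0--1 law shows that $\zeta_0 = \eta_1 = \zeta_1 = \cdots$ almost surely, all finite, so the induction says nothing about times beyond $\zeta_0$. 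So where your proposal stalls (honestly) at the impossibility of converting a submartingale drift bound into pathwise positivity, the letter both exhibits the concrete counterexample and pinpoints why the original stopping-time induction was unsound; strengthening your closing paragraph into the Brownian-motion example would make your argument complete and match the paper's conclusion.
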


Before we examine the proof of Theorem \ref{thm:zcbf_wp1}, we first illustrate that the
result does not hold
via the following simple counterexample.
\begin{exmp}[Uncontrolled Brownian Motion]
    Consider the case of (uncontrolled) Brownian Motion by taking 
    \begin{equation}
        f(x) = 0, \quad g(x) = 0, \quad \sigma(x) = 1.
    \end{equation}
    This reduces the SDE \eqref{eq:sde} to $\dd{x}_t = \dd{W}_t$, which has the solution $x_t = W_t$. We now construct the ZCBF $h$ as $h(x) = x$, where the safe operating region $\mathcal{C}$ corresponds to the non-negative reals $\mathcal{C} = \{ x : x \geq 0 \} = \mathbb{R}_{\geq 0}$.

    It can be readily verified that $h$ is a ZCBF as it satisfies \eqref{eq:zcbf}, which in this case reduces to
    \begin{equation}
        h(x) > 0 \implies 0 \geq -h(x).
    \end{equation}
    However, Theorem \ref{thm:zcbf_wp1} claims that
    \begin{equation}
      \Pr(W_t \geq 0, \forall t \geq 0) = 1  
    \end{equation}
    which is not true, since $W_t$ is normally distributed with zero mean and variance $t$.
\end{exmp}

\section{Flaw of the Proof}
The main flaw in the proof comes from an implicit assumption when applying mathematical induction. Let $\theta \in (0, h(x_0)]$ and define the following sequence of stopping times $\eta_i$ and $\zeta_i$ for $i = 0, 1, \dots$ as
\begin{align}
    \eta_0 &= 0, \\
    \zeta_0 &= \inf\{ t : h(x_t) > \theta \} \\
    \eta_i &= \inf\{ t : h(x_t) < \theta, t > \zeta_{i-1} \}, \quad i = 1, 2, \dots \\
    \zeta_i &= \inf\{ t : h(x_t) > \theta, t > \eta_{i-1} \}, \quad i = 1, 2, \dots.
\end{align}
Next, define the random process $U_t$ as follows:
\begin{equation}
    U_t = \theta + \sum_{i=0}^\infty \left[ \int_{\eta_i \wedge t}^{\zeta_i \wedge t} -\theta \dd{\tau} + \int_{\eta_i \wedge t}^{\zeta_i \wedge t} \sigma \frac{\partial h}{\partial x} \dd{W}_\tau \right].
\end{equation} 
The flaw then comes from the following statement:
\begin{displayquote}
We will first prove by induction that $h(x_t) \geq U_t$ and $U_t \leq \theta$.
\end{displayquote}
While the intention is to prove that this property holds for all $t \geq 0$, this turns out to not be the case. In particular, the mathematical induction is performed by showing that this property holds for all closed intervals $[\eta_i, \zeta_i]$ and $[\zeta_i, \eta_{i+1}]$. However, unless the union of these intervals covers $[0, \infty)$, i.e.,
\begin{equation}
  \lim_{i \to \infty} \eta_i = \lim_{i \to \infty} \zeta_i = \infty,
\end{equation}
the property does not hold for all $t \geq 0$, and the proof is invalid.

\subsection{Proof Counterexample for Uncontrolled Brownian Motion}
In the case of (uncontrolled) Brownian motion, it can be shown that, except for $\eta_0 = 0$, all the stopping times take on the same value.
\begin{lem}[]
    Suppose $\dd{x_t} = \dd{W}_t$ and $h(x_t) = x_t = W_t$. Then, the stopping times $\eta_i, i = 1, 2, \dots$ and $\zeta_i, i = 0, 1, 2, \dots$ are all equal almost surely, i.e.,
    \begin{equation}
        \zeta_0 = \eta_1 = \zeta_1 = \eta_2 = \dots,\quad a.s.
    \end{equation}
\end{lem}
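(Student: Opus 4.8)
The plan is to show that every one of these stopping times equals the first passage time of $W$ to the level $\theta$,
\begin{equation}
    \tau := \zeta_0 = \inf\{ t : W_t > \theta \},
\end{equation}
and that this single value then propagates through the entire interlocking sequence. First I would record the elementary facts: since $W_0 = 0 < \theta$ for any $\theta > 0$ and $W$ has continuous paths, the hitting time is attained with $W_\tau = \theta$, and $\tau < \infty$ a.s.\ because $\limsup_{t \to \infty} W_t = +\infty$ a.s. I would also note that $\zeta_1 = \inf\{ t > \eta_0 : W_t > \theta \} = \inf\{ t > 0 : W_t > \theta \} = \tau$ coincides with $\zeta_0$ for free, simply because $\eta_0 = 0$.

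The heart of the argument is the regularity of Brownian motion at the level it currently occupies: started from $\theta$, it \emph{instantaneously} returns both above and below $\theta$. Concretely, I would invoke the strong Markov property at $\tau$, so that $\tilde W_s := W_{\tau + s} - \theta$ is a standard Brownian motion started at $0$. For such a motion, the event $\{ \inf\{ s > 0 : \tilde W_s > 0 \} = 0 \}$ lies in the germ $\sigma$-field $\mathcal{F}_{0+}$ and hence, by Blumenthal's $0$--$1$ law, has probability $0$ or $1$; since $\mathbb{P}(\tilde W_s > 0) = \tfrac12$ for every $s > 0$, its probability is at least $\tfrac12$, and therefore equals $1$. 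By symmetry (the process $-\tilde W$ is again a standard Brownian motion) the same holds with the inequality reversed. Translating back to $W$, this yields, almost surely,
\begin{equation}
    \inf\{ t > \tau : W_t > \theta \} = \inf\{ t > \tau : W_t < \theta \} = \tau.
\end{equation}

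It then remains to feed this identity into a routine induction, carried out pathwise on the probability-one event where it holds. I would prove $\eta_i = \zeta_i = \tau$ for all $i \geq 1$: the base case $\eta_1 = \inf\{ t > \zeta_0 : W_t < \theta \} = \inf\{ t > \tau : W_t < \theta \} = \tau$ is immediate, and the inductive step is identical, since $\eta_i$ and $\zeta_i$ are by definition the first times after $\zeta_{i-1} = \tau$ and $\eta_{i-1} = \tau$ that $W$ drops below, respectively rises above, $\theta$, each of which is again $\tau$. Combined with $\zeta_0 = \tau$, this gives $\zeta_0 = \eta_1 = \zeta_1 = \eta_2 = \cdots = \tau$ a.s., as claimed.

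The main obstacle, and the only genuinely probabilistic input, is the immediate-recrossing claim of the second paragraph: one must rule out the plausible-looking but false scenario in which, after first touching $\theta$, the path spends a positive amount of time strictly on one side before returning. This is exactly what Blumenthal's $0$--$1$ law (applied via the strong Markov property) settles, and it is the step I would take the most care over; everything else is bookkeeping over the definitions of $\eta_i$ and $\zeta_i$.
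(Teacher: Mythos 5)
Your proof is correct and takes essentially the same route as the paper's: Blumenthal's $0$--$1$ law combined with the symmetry of Brownian motion to show that at the first passage time of the level $\theta$ the path immediately recrosses in both directions, after which the remaining stopping times collapse. The only difference is presentational --- you make the strong Markov reduction to a fresh Brownian motion explicit and invoke Blumenthal's law once, then identify all later stopping times pathwise on a single almost-sure event, whereas the paper works directly in the germ field $\mathcal{F}_{\zeta_0}^+$ and ``repeats this argument'' at each successive stopping time.
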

\begin{proof}

    Note that $\{ \eta_1 = \zeta_0 \} \in \mathcal{F}_{\zeta_0}^+$. Thus, by Blumenthal's 0-1 law \cite{le2016brownian}, $\Pr(\eta_1 = \zeta_0) \in \{0, 1\}$. By symmetry of Brownian motion, for any $\tau > 0$,
    \begin{equation}
        \Pr( \eta_1 \leq \zeta_0 + \tau) \geq \Pr(W_{\zeta_0 + \tau} < \theta ) = \Pr(W_{\zeta_0 + \tau} > \theta) = \frac{1}{2}
    \end{equation}
    Taking $\tau \downarrow 0$, we get that $\Pr(\eta_1 = \zeta_0) \geq \frac{1}{2}$. Hence, $\eta_1 = \zeta_0$ almost surely. Repeating this argument then shows that all the stopping times are equal almost surely.
\end{proof}
Moreover, $\zeta_0$ is finite almost surely. Hence, $\lim_{i \to \infty} \zeta_i = \eta_i < \infty$ almost surely.

\section{Stochastic Reciprocal Control Barrier Functions are Safe With Probability One}
Analogous to the deterministic case, the paper \cite{clark2021control} also defines a stochastic \textit{reciprocal} control barrier function (RCBF) $B$, where $B$ tends to infinity as the system state approaches the boundary of the safe region $\mathcal{C}$:
\begin{defn}[Reciprocal-CBF]
    A reciprocal CBF is a function $B : \XSet \to \mathbb{R}$ that is locally Lipschitz, twice differentiable on $\interior(C)$, and satisfies the following properties:
    \begin{enumerate}
        \item There exist class-$\mathcal{\kappa}$ functions
        \footnote{
        A function $\alpha : \mathbb{R} \to \mathbb{R}$ is class-$\mathcal{\kappa}$ if it is strictly increasing and $\alpha(0) = 0$
        }
        $\alpha_1$ and $\alpha_2$ such that
        \begin{equation} \label{eq:rcbf_bounds}
            \frac{1}{\alpha_1( h(x) )} \leq B(x) \leq \frac{1}{\alpha_2( h(x) )},
        \end{equation}
        for all $x \in \interior(C)$.
\item There exists a class-$\mathcal{\kappa}$ function $\alpha_3$ such that, for all $x \in \interior(C)$, there exists $u \in \USet$ such that
        \begin{equation} \label{eq:rcbf}
            \frac{\partial B}{\partial x}( f(x) + g(x) u ) + \frac{1}{2} \Tr\lft(\sigma\T \frac{\partial^2 B}{\partial x^2} \sigma\rht) \leq \alpha_3( h(x) ).
        \end{equation}
    \end{enumerate}
\end{defn}
Accordingly, a similar theorem of almost-sure safety is stated for stochastic RCBFs.
\begin{thm}[see {{\cite[Theorem 1]{clark2019control}, \cite[Theorem 2]{clark2021control}}}] \label{thm:rcbf_wp1}
Suppose that there exists an RCBF $B$ for a controlled stochastic process $x_t$ described by \eqref{eq:sde}, and at each time $t$, $u_t$ satisfies \eqref{eq:rcbf}. Then $\Pr(x_t \in C, \forall t\geq0) = 1$, provided that $x_0 \in C$.
\end{thm}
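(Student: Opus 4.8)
My proof proposal exploits the one structural feature that the RCBF has and the ZCBF lacks: the lower bound \eqref{eq:rcbf_bounds} forces $B(x) \geq 1/\alpha_1(h(x)) \to \infty$ as $x$ approaches $\partial C$, so the event that a trajectory leaves $C$ coincides with the event that $B(x_t)$ blows up. This lets me replace the oscillating sequence $\eta_i, \zeta_i$ of \cite{clark2021control} by a single \emph{monotone} family of localizing stopping times, which by construction cannot accumulate below the exit time, thereby sidestepping exactly the flaw exposed above. Throughout I assume $x_0 \in \interior(C)$, so that $B(x_0) < \infty$, and, as is standard in the RCBF literature, that $C$ is compact (so that the sublevel sets $\{x \in \interior(C) : B(x) \leq n\}$ are compact and bounded away from $\partial C$).

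First I would write the infinitesimal generator as
\[
  \mathcal{L}B(x) = \frac{\partial B}{\partial x}\bigl(f(x) + g(x)u\bigr) + \frac{1}{2}\Tr\!\left(\sigma\T \frac{\partial^2 B}{\partial x^2}\sigma\right),
\]
so that along any trajectory driven by an admissible control the RCBF condition \eqref{eq:rcbf} reads $\mathcal{L}B(x_t) \leq \alpha_3(h(x_t))$. I then define the localizing stopping times $\tau_n = \inf\{t : B(x_t) \geq n\}$ and the exit time $\tau = \inf\{t : x_t \notin \interior(C)\}$. Because $B$ is continuous on $\interior(C)$ and diverges at the boundary, the $\tau_n$ increase to $\tau$ almost surely, $\tau_n \nearrow \tau$; hence it suffices to prove $\tau_n \to \infty$, equivalently $\Pr(\tau \leq t) = 0$ for every fixed $t \geq 0$.

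The core step is Dynkin's formula on $[0, t \wedge \tau_n]$. On this interval the trajectory stays in the compact set $\{B \leq n\}$, on which $\frac{\partial B}{\partial x}\sigma$ is bounded (so the stochastic integral is a genuine martingale with zero expectation) and, since $C$ is compact, $\alpha_3(h(x_s)) \leq \alpha_3\bigl(\max_{x\in C}h(x)\bigr) =: M < \infty$. This yields
\[
  \E\bigl[B(x_{t\wedge\tau_n})\bigr] = B(x_0) + \E\!\int_0^{t\wedge\tau_n}\!\!\mathcal{L}B(x_s)\,\diffd s \leq B(x_0) + M t .
\]
Since $B \geq 0$ and, by continuity, $B(x_{t\wedge\tau_n}) \geq n$ on the event $\{\tau_n \leq t\}$, a Markov-type inequality gives $n\,\Pr(\tau_n \leq t) \leq B(x_0) + Mt$, so that $\Pr(\tau \leq t) \leq \Pr(\tau_n \leq t) \leq (B(x_0)+Mt)/n \to 0$. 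Hence $\Pr(\tau \leq t) = 0$ for all $t$, i.e.\ $\tau = \infty$ almost surely, which is precisely $\Pr(x_t \in C,\ \forall t \geq 0) = 1$.

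The step I expect to be the main obstacle — and the precise point where the RCBF argument succeeds where the ZCBF one failed — is the passage to the limit in the localization. The work is in establishing that the monotone times satisfy $\tau_n \nearrow \tau$ with no accumulation strictly below $\tau$; this is exactly what the blow-up of $B$ at $\partial C$ buys us, in contrast to the oscillating $\eta_i,\zeta_i$, which (as the counterexample shows) can pile up at a finite time. A secondary, more routine technical obstacle is the rigorous justification of Dynkin's formula and of the martingale property of the diffusion term, which is where the compactness of the sublevel sets and the class-$\kappa$ bounds \eqref{eq:rcbf_bounds} are used; relaxing compactness of $C$ would require either a linear drift bound $\mathcal{L}B \leq c(1+B)$ (replacing the martingale above by the supermartingale $e^{-ct}(1+B(x_{t\wedge\tau_n}))$) or an additional localization of $\norm{x_t}$, a point worth flagging for the subsequent discussion of unbounded controls.
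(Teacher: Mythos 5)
Your proposal is correct under its own hypotheses, and it is a genuinely different route from the paper's. The paper does not use monotone localizing times and Dynkin's formula at all: instead (\Cref{lem:B_tilde_bound}) it splits the drift and diffusion of $B_t = B(x_t)$ at the level $B_0$, bounds the drift by the constant $\tilde{b} = \alpha_3\lft(\alpha_2^{-1}(1/B_0)\rht)$ on the set where $B_\tau \geq B_0$ (there \eqref{eq:rcbf_bounds} gives $h \leq \alpha_2^{-1}(1/B_\tau) \leq \alpha_2^{-1}(1/B_0)$, so $\alpha_3(h) \leq \tilde b$), absorbs the entire contribution of the excursions below $B_0$ into the local time term $\frac{1}{2}L_t^{B_0}(B)$ via Tanaka's formula (\Cref{thm:tanaka}), and then controls $\sup_{s \leq t} \tilde{B}_s$ for the resulting linear-drift semimartingale \eqref{eq:B_tilde_def} by Doob's maximal inequality \eqref{eq:doob_ineq}, concluding in \Cref{thm:rcbf_wp1:fixed}. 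Your Khasminskii-style nonexplosion argument ($\tau_n = \inf\{t : B(x_t) \geq n\} \nearrow \tau$, Dynkin on $[0, t\wedge\tau_n]$, Markov's inequality, $n \to \infty$) is more elementary, and it correctly isolates why the RCBF escapes the flaw in the original induction: the $\tau_n$ are monotone and, because $B$ blows up at $\partial C$, cannot accumulate strictly below the exit time — in contrast to the oscillating $\eta_i, \zeta_i$. Your route also has the virtue of making explicit the integrability needed for the stochastic integral to have zero expectation (boundedness of $\frac{\partial B}{\partial x}\sigma$ on compact sublevel sets), a point the paper glosses over when it asserts without justification that $\tilde{B}$ is a submartingale.

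The substantive gap is the compactness assumption on $C$, which is not in the theorem statement and which your proof genuinely uses: the Dynkin bound needs $\mathcal{L}B(x_s) \leq \alpha_3(h(x_s)) \leq M$ uniformly along the stopped path, and on the sublevel set $\{B \leq n\}$ the bounds \eqref{eq:rcbf_bounds} only give a \emph{lower} bound $h \geq \alpha_1^{-1}(1/n)$, so for unbounded $C$ (take $h(x) = x$ on $\mathbb{R}$, $B = 1/h$) the set $\{B \leq n\} = \{h \geq 1/n\}$ is unbounded and $\alpha_3(h)$ is unbounded on it. This is precisely the difficulty the paper's level-split is designed to avoid: the region where $\alpha_3(h)$ can be large is exactly $\{B < B_0\}$, and there the drift is never estimated through $\alpha_3$ at all but is controlled pathwise by the local time at $B_0$. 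Your flagged remedies ($\mathcal{L}B \leq c(1+B)$, or localizing $\norm{x_t}$) are extra hypotheses rather than consequences of the RCBF definition; a repair that stays within the definition would be to run your Dynkin argument on $\max(B_s, B_0)$ — which, by Tanaka, has drift $\overline{a}_\tau \mathbb{I}_{(B_\tau > B_0)}\dd{\tau} + \frac{1}{2}\dd{L}_\tau^{B_0}(B) \leq \tilde{b}\,\dd{\tau} + \frac{1}{2}\dd{L}_\tau^{B_0}(B)$ — at which point you have essentially reconstructed the paper's proof. As written, then, your argument proves the theorem for compact $C$ by a cleaner and more classical method, while the paper's local-time construction is what covers the general case.
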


The proof of Theorem \ref{thm:rcbf_wp1} from \cite{clark2019control} also makes use of the same mathematical induction argument as a proof technique by defining a sequence of stopping times $\eta_i$ and $\zeta_i$ and showing that an analogous property holds over all closed intervals $[\eta_i, \zeta_i]$ and $[\zeta_i, \eta_{i+1}]$. Similar to the ZCBF case, the validity of the proof hinges on whether it can be shown that the union of these intervals covers $[0, \infty)$ or not. While we were unable to show that this is true using this proof technique, we are also unable to find a counter-example.

Instead, we use a different proof technique. Similar to before, we construct a semimartingale $\tilde{B}$ with linear drift that upper-bounds the RCBF $B$. However, instead of using stopping times, we make use of local times and Tanaka's formula.

To start, we begin with Tanaka's formula\footnote{Some sources define local time with an extra coefficient of $1/2$ \cite{karatzas2012brownian}, which results in the coefficient for $L_t^a$ being different.}.
\begin{thm}{(Tanaka's Formula \cite[p.~222]{revuz2013continuous}, \cite[p.~237]{le2016brownian})}{\label[theorem]{thm:tanaka}}
    Let $X$ be a continuous semimartingale and $a \in \mathbb{R}$. There exists an increasing process $(\, L_t^a(X) \,)_{t\geq0}$ called the local time of $X$ in $a$, such that 
    \begin{equation}
        \begin{split}
        &\mathrel{\phantom{=}}\max(X_t - a, 0) \\
        &= \max(X_0 - a, 0) + \int_0^t \mathbb{I}_{(X_s > a)} \dd{X}_s + \frac{1}{2} L_t^a(X).
        \end{split}
    \end{equation}
\end{thm}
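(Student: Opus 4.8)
The plan is to deduce Tanaka's formula from the classical It\^o formula by approximating the convex but non-smooth map $\phi(x) := \max(x-a,0)$ by smooth convex functions, applying It\^o's formula to each approximant, and identifying the local time as the limit of the resulting second-order term. A direct application of It\^o's formula is impossible because $\phi$ has a corner at $x = a$: its derivative equals $\mathbb{I}_{(x > a)}$ away from $a$, but its second derivative exists only as the Dirac mass $\delta_a$. The whole content of the statement is to make sense of the would-be term \enquote{$\frac{1}{2}\int_0^t \delta_a(X_s)\,\dd{\langle X\rangle}_s$} as a genuine increasing process $L_t^a(X)$.

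First I would construct the approximants. Fix a smooth nonnegative kernel $\rho$ supported in $(0,1)$ with $\int_{\mathbb{R}}\rho = 1$, set $\rho_n(y) := n\rho(ny)$, and define $\phi_n := \phi * \rho_n$. Since $\phi$ is convex and $\rho_n \geq 0$, each $\phi_n$ is smooth and convex, with $\phi_n'' = \rho_n(\cdot - a) \geq 0$. The one-sided support of $\rho$ is chosen deliberately: it yields $\phi_n'(x) = \int_0^{(x-a)^+}\rho_n(y)\,\dd{y} \to \mathbb{I}_{(x > a)}$ for \emph{every} $x$, including $x = a$, while $0 \leq \phi_n' \leq 1$ and $\phi_n \to \phi$ uniformly. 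Writing $X = X_0 + M + A$ with $M$ a continuous local martingale and $A$ a continuous finite-variation process (so that $\langle X\rangle = \langle M\rangle$), the classical It\^o formula applied to $\phi_n \in C^2$ gives
\begin{equation} \label{eq:tanaka_ito_n}
\begin{split}
  \phi_n(X_t) &= \phi_n(X_0) + \int_0^t \phi_n'(X_s)\,\dd{X}_s \\
  &\quad + \frac{1}{2}\int_0^t \phi_n''(X_s)\,\dd{\langle X\rangle}_s .
\end{split}
\end{equation}

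Next I would pass to the limit $n \to \infty$ in \eqref{eq:tanaka_ito_n}. The two boundary terms converge pathwise by the uniform convergence $\phi_n \to \phi$. For the stochastic integral I would localize by stopping times making $M$, $\langle M\rangle$, and the total variation of $A$ bounded, then split $\int_0^t \phi_n'(X_s)\,\dd{X}_s$ into a martingale and a finite-variation part. The finite-variation part converges by the dominated convergence theorem for Lebesgue--Stieltjes integrals, using $\phi_n'(X_s) \to \mathbb{I}_{(X_s > a)}$ pointwise and $|\phi_n'| \leq 1$. For the martingale part, the It\^o isometry gives
\begin{equation} \label{eq:tanaka_isom}
\begin{split}
  &\E\Bigl[\Bigl(\int_0^t \bigl(\phi_n'(X_s) - \mathbb{I}_{(X_s > a)}\bigr)\,\dd{M}_s\Bigr)^2\Bigr] \\
  &\qquad = \E\Bigl[\int_0^t \bigl(\phi_n'(X_s) - \mathbb{I}_{(X_s > a)}\bigr)^2\,\dd{\langle M\rangle}_s\Bigr],
\end{split}
\end{equation}
whose right-hand side tends to $0$ by dominated convergence, since the integrand is bounded by $1$ and vanishes pointwise. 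As every term of \eqref{eq:tanaka_ito_n} except the last now converges, the last term must converge too, and I would \emph{define}
\begin{equation} \label{eq:tanaka_deflocaltime}
  \frac{1}{2} L_t^a(X) := \lim_{n \to \infty} \frac{1}{2}\int_0^t \phi_n''(X_s)\,\dd{\langle X\rangle}_s ,
\end{equation}
so that the claimed identity holds by construction. Each approximant in \eqref{eq:tanaka_deflocaltime} is nondecreasing in $t$ (as $\phi_n'' \geq 0$ and $\langle X\rangle$ is nondecreasing), a property that passes to the limit $L_t^a$.

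The hard part will be upgrading these convergences, which a priori hold only for each fixed $t$, to convergence uniform on compact time intervals, which is what is needed to conclude that $L^a$ is a genuine \emph{continuous} increasing process and that the identity holds simultaneously for all $t$ on one event of probability one. I would obtain this by combining Doob's maximal inequality applied to the martingale integral controlled by \eqref{eq:tanaka_isom} with a maximal bound on the finite-variation part, which promotes the fixed-$t$ $L^2$ and pathwise estimates to uniform-on-compacts convergence in probability; extracting an almost surely uniformly convergent subsequence then exhibits $L^a$ as a continuous, adapted, increasing process and transfers \eqref{eq:tanaka_ito_n} to the limit path by path. A final routine point is that the limit in \eqref{eq:tanaka_deflocaltime} is independent of the mollifier $\rho$, since any two choices yield identical values for all the remaining, mollifier-independent terms of the identity.
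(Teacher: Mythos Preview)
Your proof sketch is correct and follows the standard mollification argument for Tanaka's formula. However, there is nothing to compare against: the paper does not supply its own proof of this theorem. It is stated purely as a cited tool, with references to \cite[p.~222]{revuz2013continuous} and \cite[p.~237]{le2016brownian}, and is then applied in the proof of \Cref{lem:B_tilde_bound}. Your approach---approximating $\phi(x)=\max(x-a,0)$ by one-sided mollifications, applying It\^o's formula to each $\phi_n$, and defining the local time as the limit of the second-order term---is precisely the argument given in those cited textbooks, so in that sense you have reproduced the proof the paper implicitly invokes.
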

Next, we state Doob's Martingale Inequality\footnote{This result is also referred to as Doob's Maximal Inequality \cite{revuz2013continuous,le2016brownian}.} \cite{karatzas2012brownian} below.
\begin{lem}{(Doob's Martingale Inequality \cite[Thm 3.8]{karatzas2012brownian})}{}
    Let ($X_t, \mathcal{F}_t$) be a submartingale, $[t_0, t_1]$ a subinterval of $[0, \infty)$, and $\lambda > 0$. Then,
    \begin{equation} \label{eq:doob_ineq}
        \lambda \Pr\left( \sup_{t_0 \leq t \leq t_1} X_t \geq \lambda\right) \leq \E[\max\{X_t, 0\}]
    \end{equation}
\end{lem}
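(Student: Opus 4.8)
The plan is to prove this via a first-passage stopping time combined with the optional stopping theorem, which is the standard route to Doob-type maximal inequalities. Write $A = \{ \sup_{t_0 \leq t \leq t_1} X_t \geq \lambda \}$ for the event of interest, and define the bounded stopping time
\[
  \tau = \inf\{ t \in [t_0, t_1] : X_t \geq \lambda \} \wedge t_1,
\]
i.e.\ the first time on $[t_0,t_1]$ at which $X$ reaches the level $\lambda$, truncated at the terminal time $t_1$. Since the submartingales arising in our application have continuous paths, on $A$ the first-passage time is attained and $X_\tau \geq \lambda$, whereas on $A^c$ we simply have $\tau = t_1$.

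First I would combine the definitions of $A$ and $\tau$ to obtain the elementary bound
\[
  \lambda \Pr(A) = \lambda\, \E[\mathbb{I}_A] \leq \E[X_\tau \mathbb{I}_A],
\]
valid because $X_\tau \geq \lambda$ on $A$. Next I would invoke the optional stopping theorem for submartingales with the bounded stopping times $\tau \leq t_1$, giving $\E[X_\tau] \leq \E[X_{t_1}]$. Splitting both expectations over $A$ and $A^c$ and cancelling the common term $\E[X_{t_1} \mathbb{I}_{A^c}]$ (using $\tau = t_1$ on $A^c$) yields $\E[X_\tau \mathbb{I}_A] \leq \E[X_{t_1} \mathbb{I}_A]$. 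Chaining this with the previous display and bounding $X_{t_1} \mathbb{I}_A \leq \max\{X_{t_1}, 0\}$ pointwise then gives
\[
  \lambda \Pr(A) \leq \E[X_{t_1} \mathbb{I}_A] \leq \E[\max\{X_{t_1}, 0\}],
\]
which is the claimed inequality (with the positive part understood at the terminal time $t_1$).

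The main obstacle is justifying both $X_\tau \geq \lambda$ on $A$ and the applicability of optional stopping in continuous time, as each requires some path regularity. For a raw submartingale indexed by the uncountable set $[t_0,t_1]$, the supremum need not even be measurable and the first-passage time need not satisfy $X_\tau \geq \lambda$. I would circumvent this by restricting to a countable dense subset $D \subset [t_0,t_1]$, applying the discrete-time Doob inequality to finite subsets $D_n \uparrow D$, and passing to the limit by monotone convergence; under the right-continuity granted by the usual conditions---and in particular for the continuous semimartingales to which we apply the result---the supremum over $D$ coincides with the supremum over $[t_0,t_1]$, recovering the stated continuous-time form.
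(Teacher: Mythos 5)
The paper offers no proof of this lemma: it is stated as a known classical result and cited directly from \cite[Thm 3.8]{karatzas2012brownian}, so there is no in-paper argument to compare yours against. Taken on its own merits, your optional-stopping proof is the standard one and is essentially correct. For the continuous semimartingales to which the paper actually applies the inequality, the level set $\{t \in [t_0,t_1] : X_t \geq \lambda\}$ is closed along each path, so the infimum defining $\tau$ is attained with $X_\tau \geq \lambda$ on $A$, and the chain $\lambda \Pr(A) \leq \E[X_\tau \mathbb{I}_A] \leq \E[X_{t_1} \mathbb{I}_A] \leq \E[\max\{X_{t_1}, 0\}]$ goes through by optional sampling at the bounded stopping time $\tau \leq t_1$ exactly as you describe ($\tau$ is a genuine stopping time under the usual conditions assumed in the paper's setup). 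You also correctly diagnose the typo in \eqref{eq:doob_ineq}: the right-hand side should read $X_{t_1}$, not $X_t$. One refinement for the general, merely right-continuous case that your final paragraph invokes: the union over the finite sets $D_n$ of the events $\{\max_{t \in D_n} X_t \geq \lambda\}$ equals $\{\exists t \in D : X_t \geq \lambda\}$, which can be strictly smaller than $\{\sup_{t \in D} X_t \geq \lambda\}$ when the supremum is approached but not attained, so the monotone-convergence step as sketched does not directly yield the closed event. The standard repair is to run the argument for the open events $\{\sup_t X_t > \lambda'\}$ with $\lambda' < \lambda$, for which the exhaustion by finite subsets is exact, and then let $\lambda' \uparrow \lambda$. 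With that one-line patch, your argument coincides with the proof in the cited reference.
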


We now prove that the existence of a stochastic RCBF implies that the system remains safe with probability one. To do so, we first prove the following lemma, which probabilistically bounds $B(x)$ with another stochastic process with a linear drift term.
\begin{lem}{}{\label[lemma]{lem:B_tilde_bound}}
    For any $t > 0$ and $\delta > 0$, let $M \in \mathbb{R}$ be such that $P\left( \frac{1}{2}L_t^{B_0}(B) > M \right) < \frac{\delta}{2}$, and define the stochastic process $\tilde{B}$ as
    \begin{align} 
        \tilde{b} &\coloneqq \alpha_3\left( \alpha_2^{-1} \left( \frac{1}{B_0} \right) \right), \label{eq:b_drift_def} \\
        \tilde{B}_t &\coloneqq B_0 + M + \tilde{b} t
            + \int_0^t \frac{\partial B}{\partial x} \sigma(x_s) \mathbb{I}_{(X_s > a)} \dd{W}_s. \label{eq:B_tilde_def}
    \end{align}
    Then, with probability at least $(1 - \delta/2)$, we have that $B_s \leq \tilde{B}_s$ for all $s \in [0, t]$.
\end{lem}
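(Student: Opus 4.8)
The plan is to obtain a pathwise upper bound on the process $B_s \coloneqq B(x_s)$ by combining It\^o's formula with Tanaka's formula, and then to convert this into the stated high-probability statement using the monotonicity of the local time. First I would apply It\^o's formula to $B(x_s)$ along \eqref{eq:sde}, writing $\dd B_s = \mu_s \dd s + \nu_s \dd W_s$ with drift $\mu_s = \frac{\partial B}{\partial x}(f + g u_s) + \frac{1}{2}\Tr(\sigma\T \frac{\partial^2 B}{\partial x^2}\sigma)$ and diffusion $\nu_s = \frac{\partial B}{\partial x}\sigma$, so that the RCBF condition \eqref{eq:rcbf} gives $\mu_s \le \alpha_3(h(x_s))$. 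I would then invoke Tanaka's formula (\cref{thm:tanaka}) for the continuous semimartingale $B$ at the level $a = B_0$. Since $\max(B_0 - B_0, 0) = 0$, this yields, for every $s \le t$, the identity $\max(B_s - B_0, 0) = \int_0^s \mathbb{I}_{(B_r > B_0)}\dd B_r + \frac{1}{2} L_s^{B_0}(B)$.

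The next key step is to bound the drift on the excursion set $\{B_r > B_0\}$. On this set the upper bound in \eqref{eq:rcbf_bounds} gives $\alpha_2(h(x_r)) \le 1/B_r < 1/B_0$, so that $h(x_r) < \alpha_2^{-1}(1/B_0)$ because $\alpha_2$ is strictly increasing; applying the (strictly increasing) $\alpha_3$ then yields $\mu_r \le \alpha_3(h(x_r)) < \alpha_3(\alpha_2^{-1}(1/B_0)) = \tilde b$, the constant defined in \eqref{eq:b_drift_def}. Because $\tilde b > 0$, the drift integral is controlled by $\int_0^s \mathbb{I}_{(B_r > B_0)}\mu_r \dd r \le \tilde b\, s$. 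Substituting this into the Tanaka identity and using the elementary inequality $B_s - B_0 \le \max(B_s - B_0, 0)$ produces the pathwise bound $B_s \le B_0 + \tilde b\, s + \int_0^s \nu_r \mathbb{I}_{(B_r > B_0)} \dd W_r + \frac{1}{2} L_s^{B_0}(B)$, valid for every $s \in [0,t]$.

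It then remains to compare with $\tilde B_s$ from \eqref{eq:B_tilde_def}. The stochastic integral appearing in the bound above coincides exactly with the martingale term of $\tilde B_s$, so the two cancel, leaving $B_s - \tilde B_s \le \frac{1}{2} L_s^{B_0}(B) - M$. Here the crucial observation, and the reason this argument succeeds where the stopping-time induction fails, is that $s \mapsto L_s^{B_0}(B)$ is an increasing process, whence $L_s^{B_0}(B) \le L_t^{B_0}(B)$ for all $s \le t$ \emph{simultaneously}. Consequently, on the single event $\{\, \frac{1}{2} L_t^{B_0}(B) \le M \,\}$, which by the choice of $M$ has probability at least $1 - \delta/2$, we obtain $B_s \le \tilde B_s$ for all $s \in [0,t]$ at once, as claimed.

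The main obstacle I anticipate is technical rather than conceptual: justifying that $B(x_s)$ is a genuine continuous semimartingale so that It\^o's and Tanaka's formulas apply, given that $B$ is only twice differentiable on $\interior(C)$ and diverges at $\partial C$. I would address this by running the argument up to the first exit time of $x_s$ from $\interior(C)$ and noting that the derived upper bound already prevents $B$ from escaping to $+\infty$ on $[0,t]$. The uniform-in-$s$ control supplied by the monotone local time is precisely what upgrades the endpoint estimate into a statement holding for all $s \in [0,t]$, and it is this feature that will serve as the linchpin of the subsequent safety theorem.
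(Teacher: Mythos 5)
Your proposal is correct and is essentially the paper's own argument in mirror-image form: you apply Tanaka's formula to $\max(B_s-B_0,0)$ and bound $B_s \le B_0 + \max(B_s-B_0,0)$, whereas the paper splits $B_s$ by the indicator of $\{B_\tau < B_0\}$ and applies Tanaka to $\min(B_s-B_0,0)$ — the same level-$B_0$ decomposition, the same drift bound $\mu_r \le \alpha_3(h(x_r)) \le \tilde b$ on the upper excursion set via \eqref{eq:rcbf_bounds}, the same retained martingale term, and the same use of the monotonicity of $s \mapsto L_s^{B_0}(B)$ to make the bound hold uniformly on $[0,t]$ on the single event $\{\tfrac{1}{2}L_t^{B_0}(B) \le M\}$. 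If anything, your bookkeeping is cleaner than the paper's (whose \eqref{eq:rcbf_drift_bound} carries an indicator typo, $\mathbb{I}_{(B_\tau < B_0)}$ where the upper excursion set is meant), and your localization remark addresses a technical point the paper leaves implicit.
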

\begin{proof}
    Splitting the drift and diffusion terms depending on if $B_\tau \geq B_0$, we have that
    \begin{align}
        &\mathrel{\phantom{=}} B_s \notag \\
        &\begin{aligned}[t]
        &= B_0 + \int_0^s \underbrace{\frac{\partial B}{\partial x} a(x_\tau)
        + \frac{1}{2}\Tr\left(\sigma(x_\tau)\T \frac{\partial^2 B}{\partial x^2} \sigma(x_\tau) \right)}_{\coloneqq \overline{a}_\tau} \dd{\tau} \\
        &\hphantom{= B_0} + \int_0^s \underbrace{\frac{\partial B}{\partial x}\sigma(x_\tau)}_{\coloneqq \overline{b}_\tau} \dd{W}_\tau
        \end{aligned} \\
&= B_0 + \underbrace{\int_0^s \overline{a}_\tau \mathbb{I}_{(B_\tau \geq B_0)} \dd{\tau} + \int_0^s \overline{b}_\tau \mathbb{I}_{(B_\tau \geq B_0)} \dd{W}_\tau}_{\circled{\footnotesize 1}} \notag \\
        &\hphantom{= B_0} + \underbrace{\int_0^s \overline{a}_\tau \mathbb{I}_{(B_\tau < B_0)} \dd{\tau}}_{\circled{\footnotesize 2}} + \int_0^s \overline{b}_\tau \mathbb{I}_{(B_\tau < B_0)} \dd{W}_\tau
    \end{align}
Applying Tanaka's formula \Cref{thm:tanaka} to $\min\{ B_s - B_0, 0 \}$ gives us
\begin{equation}
    \begin{split}
    &\mathrel{\phantom{=}} \min\{ B_s - B_0, 0 \} \\
&= \int_0^s \mathbb{I}_{(B_\tau < B_0)} \overline{a}_\tau \dd{\tau} + \int_0^s \mathbb{I}_{(B_\tau < B_0)} \overline{b}_\tau \dd{W}_\tau \\
    &\qquad - \frac{1}{2} L_s^{B_0}(B).
    \end{split}
    \raisetag{1.2\baselineskip}
\end{equation}
Since the local time $L^{B_0}(X)$ is an increasing process and $s \leq t$:
\begin{align}
&\mathrel{\phantom{=}} \int_0^s \mathbb{I}_{(B_\tau < B_0)} \overline{a}_\tau \dd{\tau} + \int_0^s \mathbb{I}_{(B_\tau < B_0)} \overline{b}_\tau \dd{W}_\tau \notag \\
    &= \min\{ B_s - B_0, 0 \} + \frac{1}{2} L_s^{B_0}(B) \\
    &\leq \frac{1}{2} L_t^{B_0}(B) \label{eq:local_time_bound}
\end{align}
Moreover, by the definition of the RCBF \eqref{eq:rcbf_bounds}--\eqref{eq:rcbf} and \eqref{eq:b_drift_def}, for any $s \geq 0$,
\begin{equation} \label{eq:rcbf_drift_bound}
\int_0^s \overline{\alpha}_\tau \mathbb{I}_{(B_\tau < B_0)} \dd{\tau} \leq \tilde{b} s.
\end{equation}
Hence, using \eqref{eq:local_time_bound} to bound $\circled{\footnotesize 1}$ and \eqref{eq:rcbf_drift_bound} to bound $\circled{\footnotesize 2}$, we have that
\begin{align}
B_s &\leq B_0 + \tilde{b} s + \int_0^s \overline{b}_\tau \mathbb{I}_{(B_\tau \geq B_0)} \dd{W}_\tau + \frac{1}{2} L_t^{B_0}(B).
\end{align}
By definition of $M$, $\Pr(\frac{1}{2} L_t^{B_0}(B) \leq M) \geq (1 - \delta / 2)$.
Hence, with probability at least $(1 - \delta/2)$,
\begin{equation}
    B_s \leq \tilde{B}_s, \quad \forall s \in [0, t].
\end{equation}
\end{proof}

We now proceed to state and prove the main theorem.
\begin{thm}{}\label[theorem]{thm:rcbf_wp1:fixed}
    Suppose there exists a stochastic RCBF for the stochastic process $X$. Then, for all $t \geq 0$, $P(X_t \in C) = 1$, provided that $X_0 \in \interior(C)$.
\end{thm}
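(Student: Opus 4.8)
The plan is to deduce safety from the comparison bound of \Cref{lem:B_tilde_bound} by showing that $B$ can never blow up in finite time, which in turn prevents $X$ from reaching $\partial C$. First I would fix $t>0$ and exploit that $X_0\in\interior(C)$ together with continuity of the sample paths: if $X_t\notin C$, then $X$ must first touch $\partial C$ at some $\tau\le t$, and since $s\mapsto B_s$ is continuous on $[0,\tau)$ with $B_s\to\infty$ as $X_s\to\partial C$, this forces $\sup_{s\in[0,t]}B_s=\infty$. Consequently,
\[
\Pr(X_t\notin C)\le \Pr\lft(\sup_{s\in[0,t]}B_s\ge\lambda\rht)\qquad\text{for every }\lambda>0 .
\]

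Next, I would fix $\delta>0$ and invoke \Cref{lem:B_tilde_bound} to obtain $M$ and the dominating process $\tilde B$ of \eqref{eq:B_tilde_def}, so that the event $A=\{\,B_s\le\tilde B_s\ \forall s\in[0,t]\,\}$ has $\Pr(A)\ge 1-\delta/2$. On $A$ we have $\sup_{s\le t}B_s\le\sup_{s\le t}\tilde B_s$, hence $\Pr(\sup_{s\le t}B_s\ge\lambda)\le \tfrac{\delta}{2}+\Pr(\sup_{s\le t}\tilde B_s\ge\lambda)$. Writing $\tilde B_s=(B_0+M+\tilde b\,s)+N_s$ with $N_s$ the stochastic-integral term of \eqref{eq:B_tilde_def}, the drift $\tilde b\,s$ is nondecreasing (as $\tilde b>0$), so $\tilde B$ is a submartingale and Doob's Martingale Inequality \eqref{eq:doob_ineq} gives
\[
\Pr\lft(\sup_{s\le t}\tilde B_s\ge\lambda\rht)\le \frac{1}{\lambda}\,\E[\max\{\tilde B_t,0\}]\le \frac{B_0+M+\tilde b\,t+\E[\abs{N_t}]}{\lambda}.
\]
Sending $\lambda\to\infty$ and then $\delta\to 0$ would yield $\Pr(X_t\notin C)=0$, i.e. $\Pr(X_t\in C)=1$.

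The step I expect to be the main obstacle is justifying that $\tilde B$ is a genuine submartingale with $\E[\abs{N_t}]<\infty$, so that the right-hand side of Doob's inequality is finite and vanishes as $\lambda\to\infty$. This is delicate because the indicator $\mathbb{I}_{(B_\tau\ge B_0)}$ keeps the integrand $\tfrac{\partial B}{\partial x}\sigma(x_\tau)$ active precisely in the region close to $\partial C$, where $\partial B/\partial x$ may be unbounded, so $N$ is a priori only a continuous local martingale rather than a true martingale. I would resolve this by localizing: choose stopping times $T_n\uparrow\infty$ reducing $N$, apply the displayed bound to each stopped submartingale $\tilde B^{T_n}$, and pass to the limit using Fatou's lemma. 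The boundary blow-up observation from the first step guarantees that the supremum of $B$ is already controlled before any explosion of $N$ could occur, which closes the argument.
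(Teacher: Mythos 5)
Your proposal is correct and takes essentially the same route as the paper's proof: path continuity reduces safety to bounding $\Pr\lft(\sup_{s \in [0,t]} B_s = \infty\rht)$, \Cref{lem:B_tilde_bound} dominates $B$ by $\tilde{B}$ outside an event of probability $\delta/2$, and Doob's inequality \eqref{eq:doob_ineq} applied to the submartingale $\tilde{B}$ together with a union bound closes the argument. The integrability obstacle you flag is genuine, but the paper simply asserts that $\tilde{B}$ is a submartingale without addressing it, so on that point your treatment is more careful than the published proof (though note your Fatou patch still needs a uniform-in-$n$ bound on $\E[\max\{\tilde{B}_{t \wedge T_n}, 0\}]$, which localization alone does not supply).
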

\begin{proof}
    Let $B$ be a stochastic RCBF, and define $B_t = B(x_t)$.
    Since each sample path of $X_t$ is continuous, each sample path of $B_t$ is also continuous.
    Hence, if $x_t \not \in C$ for some $t$, then there exists some $s < t$ such that $h(x_s ) = 0$ and hence $B_s = \infty$. The goal is to then show that, for all $t > 0$ and $\delta \in (0, 1)$,
    \begin{equation} \label{eq:p_B_infty_delta}
        \Pr(\sup_{s < t} B_s = \infty) < \delta,
    \end{equation}
    which implies that $\Pr(X_t \in C) = 1$ for all $t\geq 0$.

    Let $t>0$ and $\delta >0$. To show \Cref{eq:p_B_infty_delta} holds, it is sufficient to show the existance of a $K > 0$ such that $\Pr(\sup_{s < t} B_s > K) < \delta$. Define $\tilde{B}$ as in \eqref{eq:B_tilde_def}.
    Then, by \Cref{lem:B_tilde_bound}, $B_s \leq \tilde{B}_s$ for all $s\in[0,t]$ with probability at least $(1 - \delta/2)$.
    Furthermore, since $\tilde{B}$ is a submartingale, applying Doob's martingale inequality \eqref{eq:doob_ineq} gives us that
    \begin{equation}
        \Pr\left( \sup_{s \in [0, t]} \tilde{B}_s > K \right) \leq \frac{\delta}{2}.
    \end{equation}
    Combining these two results and applying Boole's inequality thus gives us
    \begin{align}
        &\mathrel{\phantom{=}} \Pr\left(\sup_{s \in [0, t]} B_s \leq K \right) \notag\\
        &\geq \Pr\left( B_s \leq \tilde{B}_s\, \forall s \in [0, t],\; \sup_{s \in [0,t]} \tilde{B}_s \leq K \right) \\
        &= 1 - \left( \Pr(\exists s \in [0, t],\; B_s > \tilde{B}_s) + \Pr\left(\sup_{s \in [0, t]} \tilde{B}_s > K \right) \right) \\
        &= 1 - \delta.
    \end{align}
    In other words, we have thus shown that
    \begin{equation}
        \Pr\left( \sup_{s \in [0, t]} B_s > K \right) \leq \delta.
    \end{equation}
\end{proof}

\section{Modified ZCBF conditions for safety with probability one}
Drawing on the valid safety guarantees of RCBFs, we can derive a modified ZCBF condition for safety with probabilty one by examining the conditions on $h$ that guarantee that $B$ is an RCBF.

Denote by $\overline{\mu}$ and $\overline{\sigma}$ the drift and diffusion terms of $h_t$ as in \eqref{eq:h_ito}, i.e.,
\begin{equation} \label{eq:h_ito}
    \begin{split}
    \dd{h}_t &= \underbrace{\frac{\partial h}{\partial x} ( f(x_t) + g(x_t) u ) \dd{t} + \frac{1}{2} \Tr\left(\sigma\T \frac{\partial^2 h}{\partial x_t^2} \sigma\right)}_{\coloneqq \tilde{\mu}_t} \dd{t} \\
    &\qquad + \underbrace{\frac{\partial h}{\partial x} \sigma(x_t)}_{\coloneqq \tilde{\sigma}_t} \dd{W}_t.
    \end{split}
\end{equation}
Then, by taking $B(x) = 1/h(x)$, we can derive the following corollary to \Cref{thm:rcbf_wp1:fixed}.
\begin{cor}[Modified ZCBF Conditions for Safety with Probability One]\label[theorem]{thm:zcbf_like}
    Suppose there exists a function $h: \XSet \to \mathbb{R}$ and a class-$\mathcal{\kappa}$ function $\alpha_3$ where, for all $x \in \XSet$ satisfying $h(x) > 0$, there exists $u \in \USet$ such that
    \begin{equation} \label{eq:zcbf_like_ineq}
        \tilde{\mu} - \frac{\tilde{\sigma}^2}{h(x)} \geq - h(x)^2 \alpha_3(h).
    \end{equation}
    Then, for all $t \geq 0$, $\Pr(x_t \in C) = 1$, provided that $x_0 \in \interior(C)$.
\end{cor}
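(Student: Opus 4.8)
The plan is to prove the corollary by showing that the single hypothesis \eqref{eq:zcbf_like_ineq} is, after the substitution $B(x) \coloneqq 1/h(x)$, nothing more than a restatement of the two RCBF properties \eqref{eq:rcbf_bounds}--\eqref{eq:rcbf}, so that the conclusion follows at once from \Cref{thm:rcbf_wp1:fixed}. First I would observe that on $\interior(C) = \{x : h(x) > 0\}$ the function $B = 1/h$ is well defined and inherits the regularity of $h$ (twice differentiable wherever $h$ is, hence locally Lipschitz on compact subsets of $\interior(C)$), and that $B(x) \to \infty$ as $h(x) \downarrow 0$, so $B$ has the characteristic blow-up of a reciprocal barrier.

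Next I would dispatch the sandwich bound \eqref{eq:rcbf_bounds}. Choosing $\alpha_1 = \alpha_2 = \mathrm{id}$, which is class-$\kappa$, the required inequality $1/\alpha_1(h) \leq B \leq 1/\alpha_2(h)$ collapses to $1/h \leq 1/h \leq 1/h$ and holds trivially, so this property is free.

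The substantive step is the drift condition \eqref{eq:rcbf}. Writing $B = \phi(h)$ with $\phi(y) = 1/y$ and using the It\^o chain rule together with the decomposition of $\dd{h}_t$ in \eqref{eq:h_ito}, the generator of $B$ equals $\phi'(h)\tilde{\mu} + \tfrac{1}{2}\phi''(h)\tilde{\sigma}^2 = -\tilde{\mu}/h^2 + \tilde{\sigma}^2/h^3$, where $\tilde{\mu}$ and $\tilde{\sigma}$ are the drift and diffusion coefficients of $h_t$. Requiring this generator to be at most $\alpha_3(h)$, as in \eqref{eq:rcbf}, and multiplying through by $-h^2 < 0$ (valid since $h > 0$, with the inequality reversing) yields exactly $\tilde{\mu} - \tilde{\sigma}^2/h \geq -h^2\alpha_3(h)$, i.e.\ the hypothesis \eqref{eq:zcbf_like_ineq}. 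Thus for each $x$ with $h(x) > 0$ the control $u$ furnished by \eqref{eq:zcbf_like_ineq} certifies \eqref{eq:rcbf} with the same $\alpha_3$, so $B = 1/h$ is a stochastic RCBF; invoking \Cref{thm:rcbf_wp1:fixed} then gives $\Pr(x_t \in C) = 1$ for all $t \geq 0$.

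The main difficulty here is not analytic depth but careful bookkeeping: one must check that the second-order It\^o term contributes precisely $\tilde{\sigma}^2/h^3$ (equivalently the $-\tilde{\sigma}^2/h$ term after the $-h^2$ scaling) and no spurious factors, and one must justify the identification $\interior(C) = \{h > 0\}$ so that the per-state existence of a feasible control transfers faithfully between \eqref{eq:zcbf_like_ineq} and \eqref{eq:rcbf}.
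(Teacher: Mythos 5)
Your proposal is correct and follows essentially the same route as the paper's own proof: set $B = 1/h$, take $\alpha_1(h) = \alpha_2(h) = h$ so that \eqref{eq:rcbf_bounds} holds trivially, compute the drift of $B$ via It\^o's lemma as $-h^{-2}\tilde{\mu} + h^{-3}\tilde{\sigma}^2$, impose the RCBF drift condition \eqref{eq:rcbf}, and multiply through by $-h^2$ to recover \eqref{eq:zcbf_like_ineq} before invoking \Cref{thm:rcbf_wp1:fixed}. If anything, your bookkeeping is cleaner than the paper's intermediate display \eqref{eq:zcbf_like_ineq_tmp}, which contains a typographical slip in the powers of $h$ that your version of the algebra implicitly corrects.
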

\begin{proof}
    Applying Ito's lemma to $B(x) = 1/h(x)$, we obtain the drift term $\hat{\mu}_t$ of $B$ as
    \begin{align}
        \hat{\mu}_t 
        &= \frac{\partial B}{\partial h} \tilde{\mu}_t + \frac{1}{2} \tilde{\sigma}_t^2 \frac{\partial^2 B}{\partial h^2} \\
        &= -h^{-2} \tilde{\mu}_t + h^{-3} \tilde{\sigma}_t^2 \label{eq:B_drift_h}
    \end{align}
    Applying the RCBF conditions \eqref{eq:rcbf_drift_bound} to \eqref{eq:B_drift_h} then yields
    \begin{equation} \label{eq:zcbf_like_ineq_tmp}
        -h(x)^{-1} \tilde{\mu} + h(x)^{-3} \frac{\tilde{\sigma}^2}{h(x)} \leq \alpha_3(h).
    \end{equation}
    Since $h(x) > 0$, simplifying \eqref{eq:zcbf_like_ineq_tmp} results in \eqref{eq:zcbf_like_ineq}.
    Hence, when \eqref{eq:zcbf_like_ineq} holds, $B$ is a RCBF with $\alpha_1(h) = \alpha_2(h) = h$. The result then follows from \Cref{thm:rcbf_wp1:fixed}.
\end{proof}
Notice that the modified ZCBF condition \eqref{eq:zcbf_like_ineq_tmp} has an additional \textit{correction term} $-\tilde{\sigma}^2 / h(x)$ compared to the original ZCBF condition \eqref{eq:zcbf}. Since this correction term diverges to $-\infty$ as $h(x) \downarrow 0$, the drift term $\tilde{\mu}$ must also become unbounded for the modified ZCBF condition \eqref{eq:zcbf_like_ineq} to hold. However, as we show in the next section, having $\tilde{\mu}$ become unbounded as $h(x)$ approaches zero is not sufficient to guarantee safety with probability one.

\section{Unbounded controls are insufficient for almost-sure safety}

While one might be led to believe that the ability of RCBFs to maintain safety with probability one is due to the fact that the controls can be unbounded, we show in the following section that this is not sufficient.
To better understand \textit{how} an RCBF guarantees safety, we take $h(x) = 1 / B(x)$ as in the previous section. By rearranging the Modified ZCBF constraint \eqref{eq:zcbf_like_ineq}, we obtain
\begin{equation}
    \frac{\tilde{\mu}_t}{\tilde{\sigma}_t^2} \geq h^{-1} - \tilde{\sigma}^{-2} \alpha_3( h ).
\end{equation}
In other words, as $h \downarrow 0$, $h^{-1} \to \infty$, and thus the ratio $\tilde{\mu} / \tilde{\sigma}^2$ also becomes unbounded.
However, more importantly, the ratio $\tilde{\mu}/ \tilde{\sigma}^2$ diverges \textit{faster} than $h^{-1}$. 
Some works mention that the ability of RCBFs to maintain safety with probability one is due to the fact that the controls can be unbounded \cite{wang2021safety}.
However, as we show in the following lemma, the \textit{rate} of divergence of $\tilde{\mu} / \tilde{\sigma}^2$ as $h \downarrow 0$ is the deciding factor.
\begin{lem}{}{\label[lemma]{lem:unbounded_not_enough}}
    Let the scalar stochastic process $h$ satisfy the SDE
    \begin{equation} \label{eq:h_sde}
        \dd{h}_t = \tilde{\mu}_t \dd{t} + \tilde{\sigma}_t \dd{W}_t,
    \end{equation}
    for $h \in (0, \infty)$, and suppose that the ratio $\tilde{\mu} / \tilde{\sigma}^2$ satisfies
    \begin{equation} \label{eq:lem_unbounded:ratio}
        \frac{\tilde{\mu}_t}{\tilde{\sigma}_t^2} = \gamma h^{-p}
    \end{equation}
    for any $\gamma \geq 0$ and $p > 0$. Define the stopping time $T = \inf\{t \geq 0 : X_t = 0\}$.
    Then,
    \begin{enumerate}
        \item If $p \in [0, 1)$ and $\tilde{\sigma}$ is bounded from below, then
        \begin{equation}
            \Pr(T < \infty) > 0.
        \end{equation}
        \item If $p = 1$ and $\gamma \geq 1/2$, or $p > 1$, then
        \begin{equation}
            \Pr\lft( \inf_{t \geq 0} h_t > 0 \rht) = 1 \text{ and } \Pr(T = \infty) = 1.
        \end{equation}
    \end{enumerate}
\end{lem}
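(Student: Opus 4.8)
The statement is a one‑dimensional boundary‑attainability question, so the plan is to analyze it through the \emph{scale function} of the process. The key observation is that the scale function depends on the dynamics of $h$ only through the ratio $\tilde{\mu}/\tilde{\sigma}^2$, which is exactly the quantity prescribed by \eqref{eq:lem_unbounded:ratio}; the otherwise unspecified time/path dependence of $\tilde{\sigma}$ drops out. Concretely, I would define $s$ through $s'(x)=\exp\big(-\int_1^x 2\gamma z^{-p}\,\diffd z\big)$, so that applying Ito's formula to $s(h_t)$ cancels the drift and leaves $\diffd s(h_t)=s'(h_t)\tilde{\sigma}_t\,\diffd W_t$, a continuous local martingale. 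Evaluating the integral gives $s'(x)=\exp\big(-\tfrac{2\gamma}{1-p}(x^{1-p}-1)\big)$ for $p\neq1$ and $s'(x)=x^{-2\gamma}$ for $p=1$, from which one reads off the dichotomy that drives the whole lemma: $s(0^+)$ is \emph{finite} when $p\in[0,1)$, whereas $s(0^+)=-\infty$ when $p=1,\gamma\ge\tfrac12$ or when $p>1$.

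For the first statement I would use that $s(0^+)$ is finite, so the boundary $0$ is attracting. Optional stopping of the bounded, stopped local martingale $s(h_{t\wedge\tau_a\wedge\tau_b})$ at the exit time of an interval $(a,b)$ yields the standard identity $\Pr(\tau_a<\tau_b)=\frac{s(b)-s(h_0)}{s(b)-s(a)}$, which stays bounded away from $0$ as $a\downarrow0$ precisely because $s(0^+)>-\infty$; hence the process reaches every neighborhood of $0$ with positive probability. The hypothesis that $\tilde{\sigma}$ is bounded below is then exactly what makes the speed measure finite near $0$ (equivalently, Feller's test yields a finite reaching time), upgrading ``approaches $0$'' to ``hits $0$ in finite time,'' so that $\Pr(T<\infty)>0$.

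For the second statement, the assertion $\Pr(T=\infty)=1$ follows from $s(0^+)=-\infty$: reaching $0$ would force the continuous local martingale $s(h_t)$ to attain $-\infty$, which cannot happen at a finite time. For the stronger claim $\Pr(\inf_{t\ge0}h_t>0)=1$ I would exhibit a nonnegative supermartingale that blows up at the boundary. In the representative case $p=1$, a direct Ito computation shows that $V(h_t)=h_t^{-q}$ has nonpositive drift for every $q\in(0,\,2\gamma-1]$, and since $V$ is bounded (tends to $0$) at the $+\infty$ boundary, $V(h_t)$ is a genuine nonnegative supermartingale. The maximal inequality for nonnegative supermartingales then gives $\Pr(\inf_t h_t\le a)\le\Pr(\sup_t V(h_t)\ge a^{-q})\le h_0^{-q}a^{q}\to0$ as $a\downarrow0$, i.e.\ $\inf_t h_t>0$ almost surely.

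I expect the main obstacle to be this last step, namely separating $\inf_t h_t>0$ from the weaker conclusion $T=\infty$. The clean supermartingale $h^{-q}$ degenerates exactly at the critical rate $\gamma=\tfrac12$ (no admissible $q>0$ remains), and it fails to be a global supermartingale when $p>1$ (its drift changes sign for large $h$, reflecting $s(\infty)=+\infty$ there). In those regimes, whether the trajectory stays uniformly bounded away from $0$ is governed by the behavior at the $+\infty$ boundary and, equivalently, by the growth of the quadratic variation $\langle s(h)\rangle$, which the ratio condition \eqref{eq:lem_unbounded:ratio} alone does not control; pinning this down (or imposing an additional growth/boundedness hypothesis on $\tilde{\sigma}$) is where the real care will be needed.
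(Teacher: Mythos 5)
Your proposal is correct wherever the lemma itself is correct, and at its core it takes the same route as the paper: everything is driven by the scale function with $s'(x)=\exp\big({-2\gamma\int^x z^{-p}\,\dd{z}}\big)$ and the dichotomy $s(0^+)$ finite for $p\in[0,1)$ versus $s(0^+)=-\infty$ for $p=1,\gamma\geq\frac12$ or $p>1$. The execution differs in useful ways. The paper computes $s$ explicitly (incomplete Gamma functions for $p<1$, a term-by-term series integration for $p>1$) and then cites its Feller-test theorem wholesale, checking finiteness of the speed function $v(0)$ under the lower bound on $\tilde{\sigma}$ to conclude $\Pr(T<\infty)>0$; you instead run the optional-stopping exit-probability identity and the local-martingale non-convergence argument by hand, and you replace the paper's Feller case-1 conclusion for $p=1,\gamma>\frac12$ (which rests on $s(\infty)=c/(2\gamma-1)<\infty$) with the nonnegative supermartingale $h^{-q}$, $q\in(0,2\gamma-1]$, plus the maximal inequality. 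Your formulation buys genuine extra generality: the drift cancellation in $s(h_t)$ uses only the ratio hypothesis \eqref{eq:lem_unbounded:ratio}, so it accommodates path-dependent $\tilde{\sigma}_t$, whereas the paper's invocation of Feller's test tacitly presumes time-homogeneous coefficients $\mu(X_t),\sigma(X_t)$.

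The \enquote{main obstacle} you flag in your last paragraph is not a gap in your argument but a defect in the lemma statement itself. For $p=1,\gamma=\frac12$ and for $p>1$, the paper's own computations yield $s(0)=-\infty$ \emph{and} $s(\infty)=\infty$, which is case 3 of its Feller theorem, whose conclusion is $\Pr(\inf_{t\geq0}h_t=0)=1$ together with $\Pr(T=\infty)=1$ --- directly contradicting the claimed $\Pr(\inf_{t\geq0}h_t>0)=1$. (Concretely, with $\tilde{\sigma}\equiv1$ and $p>1$ the diffusion is recurrent on $(0,\infty)$, so its infimum is $0$ almost surely.) Strict positivity of the infimum genuinely holds only for $p=1,\gamma>\frac12$, which is exactly the regime where your supermartingale exists; at $\gamma=\frac12$ the paper's Case 1 misapplies its Feller case 1, which requires $s(\infty)<\infty$ even though the paper itself computes $s(\infty)=\infty$ there. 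So the degeneration of $h^{-q}$ at the critical rate, and its failure for $p>1$, is the correct diagnosis rather than a shortcoming: in those regimes only $T=\infty$ a.s.\ (i.e., $h_t>0$ for every finite $t$) survives, and your local-martingale argument already establishes that.
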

Hence, if we take $\tilde{\sigma}$ constant and $\tilde{\mu}_t = h^{-1/2}$, then 
there with some nonzero probability, there exists some $t \geq 0$ such that $x_t$ exits $\interior(C)$ and $h_t=0$.
In some sense, the stochastic RCBF asks for the slowest polynomial divergence rate of $\tilde{\mu} / \tilde{\sigma}^2$ that still guarantees safety with probability one.

\section{Conclusion}
In this letter, we have revealed a flaw in the almost-sure safety guarantees of stochastic ZCBFs as constructed in \cite[Theorem 4]{clark2021control}, and provided a new proof technique to show that stochastic RCBFs are indeed safe with probability one.
The appeal of ZCBFs as a way to guarantee safety almost surely without the need for unbounded drift has attracted much attention in literature \cite{pereira2021safe,song2022generalization,vahs2023non,vahs2023risk,enwerem2023safe}. Unfortunately, these claims are not valid. Some works may already suspect this to be the case, as in \cite{wang2021safety}, which references only the safety guarantees of stochastic RCBFs but not stochastic ZCBFs.
We hope this letter can inspire the community to revisit previous claims made about stochastic ZCBFs and potentially construct new methods that build upon the new proof techniques and insights presented.

\begin{appendix}
\section{Proof of \Cref{lem:unbounded_not_enough}}
Before we begin, we first state the following two theorems, which are a direct application of Feller's Test for Explosions \cite[Proposition 5.22]{karatzas2012brownian}
\begin{thm}[Sufficient conditions for non-negativity]\label[theorem]{thm:feller}
    Let $X$ be a weak solution of the following scalar SDE
    \begin{equation}
        \dd{X}_t = \mu(X_t) \dd{t} + \sigma(X_t) \dd{W}_t.
    \end{equation}
    where $\mu$ and $\sigma$ are defined on $(0, \infty)$.
    For a fixed $c > 0$, define the scale function $s(x)$ for $X$ as
    \begin{equation} \label{eq:scale_fn_def}
        s(x) \coloneqq \int_c^x \exp\left( -2 \int_c^y \frac{\mu(z)}{\sigma(z)} \dd{z} \right) \dd{y},
    \end{equation}
    and the speed function $v(x)$ for $X$ as
    \begin{equation} \label{eq:speed_fn_def}
        v(x) \coloneqq \int_c^x s'(x) \int_c^{y} \frac{2}{s'(z) \sigma^2(z)} \dd{z} \dd{y}.
    \end{equation}
    Finally, define the stopping times
    \begin{align}
        T \coloneqq \inf\Big\{ t \geq 0: X_t \not \in (0, \infty) \Big\}
    \end{align}
    \begin{enumerate}
        \item If $s(0) = -\infty$ and $s(\infty) < \infty$,
        \begin{equation}
            \Pr\lft( \inf_{t \geq 0} X_t > 0 \rht) = 1,
        \end{equation}
        and $X_t$ is strictly positive with probability one for all time.
\item If both $s(0)$ and $s(\infty) \coloneqq \lim_{d \to \infty} s(d)$ are finite,
        \begin{equation}
            \Pr(X_T = 0) = \frac{s(\infty) - s(X_0)}{s(\infty) - s(0)} > 0.
        \end{equation}
        Moreover, if either $v(0)$ or $v(\infty)$ are finite, then $T$ is finite a.s., and
        \begin{equation}
            \Pr(T < \infty) > 0.
        \end{equation}
\item If $s(0) = -\infty$ and $s(\infty) = \infty$, then $T$ is infinite a.s., but $X_t$ gets arbitrarily close to both $0$ and $\infty$, i.e.,
        \begin{align}
            \Pr\lft(\inf_{t\geq0} X_t = 0\rht) = \Pr\lft(\sup_{t \geq 0} X_t = \infty \rht) &= 1, \\
            \Pr\lft(T = \infty\rht) &= 1.
        \end{align}
    \end{enumerate}
\end{thm}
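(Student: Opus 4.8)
The plan is to reduce the statement to the classical scale/speed classification of one-dimensional diffusions, of which it is a direct restatement of Feller's test \cite[Proposition 5.22]{karatzas2012brownian}. The engine is the scale function $s$. A direct computation from \eqref{eq:scale_fn_def} shows $s'(x) = \exp(-2\int_c^x \mu/\sigma^2)$, hence $s''(x) = -2\frac{\mu(x)}{\sigma^2(x)} s'(x)$, so that $\mu s' + \tfrac{1}{2}\sigma^2 s'' = 0$; thus $s$ lies in the kernel of the generator $\mathcal{L} = \mu\frac{d}{dx} + \tfrac{1}{2}\sigma^2\frac{d^2}{dx^2}$. By Itô's lemma the transformed process $Y_t \coloneqq s(X_t)$ has zero drift, $\mathrm{d}Y_t = s'(X_t)\sigma(X_t)\,\mathrm{d}W_t$, i.e. $Y$ is a continuous local martingale valued in the open interval $(s(0), s(\infty))$. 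Since $s'>0$, $s$ is a strictly increasing bijection of $(0,\infty)$ onto $(s(0),s(\infty))$, so every statement about $X_t$ approaching $0$ or $\infty$ translates into $Y_t$ approaching the corresponding scale boundary.

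First I would record the two-sided exit identity. Fix $0 < b < X_0 < a < \infty$ and let $\tau_a,\tau_b$ be the hitting times of levels $a,b$. The stopped process $Y_{t\wedge\tau_a\wedge\tau_b}$ is a bounded martingale, so optional sampling gives $\Pr(\tau_b<\tau_a) = \frac{s(a)-s(X_0)}{s(a)-s(b)}$ and $\Pr(\tau_a<\tau_b) = \frac{s(X_0)-s(b)}{s(a)-s(b)}$. From this single identity the scale parts of all three cases fall out by sending $b\downarrow0$ and $a\uparrow\infty$. In Case 1 ($s(0)=-\infty$, $s(\infty)<\infty$), for each fixed $a$ we get $\Pr(\tau_b<\tau_a)\to0$ as $b\downarrow0$, so $0$ is never hit before $a$; letting $a\uparrow\infty$ yields $\Pr(X \text{ hits } 0)=0$, whence $\inf_{t\geq0}X_t>0$ a.s. In Case 2 ($s(0),s(\infty)$ both finite), $Y$ is a bounded martingale on a bounded interval, so it converges a.s. to an endpoint; then $\E[Y_\infty]=s(X_0)$ with $Y_\infty\in\{s(0),s(\infty)\}$ gives $\Pr(X_T=0)=\frac{s(\infty)-s(X_0)}{s(\infty)-s(0)}$, which is positive since $s(X_0)$ lies strictly inside. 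In Case 3 ($s(0)=-\infty$, $s(\infty)=\infty$), as $a\uparrow\infty$ the identity gives $\Pr(\tau_b<\tau_a)\to1$, so every level $b>0$ is hit a.s. and $\inf_t X_t=0$; symmetrically $\sup_t X_t=\infty$. For $T=\infty$ a.s., observe that if $X_t\to0$ at a finite time then $Y_t=s(X_t)\to s(0)=-\infty$ at that finite time, impossible for a finite-valued continuous process; the same argument rules out reaching $\infty$.

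What remains is the time-finiteness assertion in Case 2, namely $T<\infty$ a.s. (and hence $\Pr(T<\infty)>0$) whenever $v(0)$ or $v(\infty)$ is finite. This is precisely the speed-measure half of Feller's test, and is where I would lean directly on the cited proposition: the function $v$ of \eqref{eq:speed_fn_def} is built so that $\mathcal{L}v=1$, equivalently $v'(x)=s'(x)\int_c^x \frac{2}{s'(z)\sigma^2(z)}\,\mathrm{d}z$, so that $v(X_{t\wedge\tau_a\wedge\tau_b}) - (t\wedge\tau_a\wedge\tau_b)$ is a local martingale and $\E[\tau_a\wedge\tau_b]$ is controlled by the increments of $v$; finiteness of $v$ at a boundary then bounds the mean exit time there. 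The main obstacle is exactly this step: the scale-function arguments above are elementary optional-stopping computations, whereas converting finiteness of the speed integral $v$ into almost-sure finiteness of $T$ requires the full Feller machinery (integrability of the exit time against the speed measure), which I would invoke from the stated proposition rather than re-derive. The minor technical care needed elsewhere — justifying the $b\downarrow0$, $a\uparrow\infty$ limits and the a.s. convergence of the bounded martingale $Y$ to a scale boundary (e.g. representing $Y$ as a time-changed Brownian motion via Dambis--Dubins--Schwarz) — is routine and can be dispatched with standard continuous-martingale theory.
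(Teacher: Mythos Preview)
The paper does not supply its own proof of this theorem; it is stated as a direct restatement of Feller's test for explosions with a bare citation to \cite[Proposition~5.22]{karatzas2012brownian}, which is precisely the reference you invoke for the speed-measure step. Your scale-function sketch is the standard derivation of that proposition and is essentially sound, so you are providing strictly more than the paper does.

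One step is loose, however: in Case~3 your argument that $T=\infty$ because $Y_t=s(X_t)$ is a ``finite-valued continuous process'' and hence cannot reach $-\infty$ in finite time is not valid---a continuous local martingale can diverge in finite time if its quadratic variation explodes (e.g.\ a time-changed Brownian motion with clock $t/(1-t)$). The correct argument is already implicit in your exit-probability identity: sending $b\downarrow0$ in $\Pr(\tau_a<\tau_b)=\dfrac{s(X_0)-s(b)}{s(a)-s(b)}\to1$ shows $X$ reaches every level $a>X_0$ before approaching $0$, and symmetrically sending $a\uparrow\infty$ shows $X$ reaches every level $b\in(0,X_0)$ before diverging; together these force $T=\infty$ almost surely.
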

\begin{thm}[]
\end{thm}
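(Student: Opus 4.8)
The plan is to prove \Cref{lem:unbounded_not_enough} by applying Feller's test for explosions in the form of \Cref{thm:feller} to the scalar SDE \eqref{eq:h_sde}. The decisive observation is that the scale function \eqref{eq:scale_fn_def} depends on the coefficients only through the ratio $\tilde{\mu}/\tilde{\sigma}^2$, which is prescribed by \eqref{eq:lem_unbounded:ratio} to equal $\gamma h^{-p}$; hence the scale function can be computed in closed form even though $\tilde{\mu}$ and $\tilde{\sigma}$ are not individually specified. This reduces both parts of the lemma to a classification of the boundary behavior of the scale function (and, for part (1), of the speed function \eqref{eq:speed_fn_def}) at $0$ and at $\infty$.

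First I would evaluate the inner integral $\int_c^y 2\gamma z^{-p}\,\dd{z}$, which equals $\tfrac{2\gamma}{1-p}(y^{1-p}-c^{1-p})$ for $p\neq 1$ and $2\gamma\ln(y/c)$ for $p=1$, so that the scale density is $s'(y)=\exp(-\int_c^y 2\gamma z^{-p}\,\dd{z})$. I would then test integrability of $s'$ near each boundary to decide whether $s(0)=-\int_0^c s'(y)\,\dd{y}$ is finite or $-\infty$ and whether $s(\infty)=\int_c^\infty s'(y)\,\dd{y}$ is finite or $+\infty$. For $p\in[0,1)$ the exponent tends to a finite limit as $y\downarrow 0$ and to $+\infty$ as $y\to\infty$ (so $s'$ has stretched-exponential decay), giving both $s(0)$ and $s(\infty)$ finite. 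For $p=1$ one gets the pure power $s'(y)=(c/y)^{2\gamma}$, so $s(0)=-\infty$ exactly when $2\gamma\geq 1$ and $s(\infty)<\infty$ exactly when $2\gamma>1$. For $p>1$ (with $\gamma>0$) the exponent diverges to $-\infty$ as $y\downarrow 0$, making $s'$ non-integrable at $0$ and forcing $s(0)=-\infty$.

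With this classification in hand, the conclusions follow by matching to the alternatives of \Cref{thm:feller}. For $p\in[0,1)$ both $s(0)$ and $s(\infty)$ are finite, so the second alternative gives $\Pr(X_T=0)>0$; to upgrade this to $\Pr(T<\infty)>0$ I would invoke the hypothesis that $\tilde{\sigma}$ is bounded below, which controls the explicit factor $2/(s'(z)\sigma^2(z))$ in the speed integrand \eqref{eq:speed_fn_def} and makes the speed function finite at $0$, so that $0$ is hit in finite time with positive probability. For the regime $p=1,\ \gamma\geq 1/2$, and for $p>1$, the relation $s(0)=-\infty$ makes the boundary $0$ inaccessible and yields $\Pr(T=\infty)=1$ directly; the stronger statement $\Pr(\inf_{t\geq 0}h_t>0)=1$ then follows from the first alternative of \Cref{thm:feller}, which additionally requires $s(\infty)<\infty$.

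The hard part will be the speed-function estimate underlying part (1). Because the scale function is insensitive to the magnitude of $\tilde{\sigma}$, it only certifies that $0$ is \emph{accessible}; showing that $0$ is actually reached in \emph{finite} time with positive probability is exactly where the lower bound on $\tilde{\sigma}$ must enter, and the integrability of \eqref{eq:speed_fn_def} at $0$ has to be checked against the closed form of $s'$. A secondary delicate point is the borderline of part (2): at $p=1,\ \gamma=1/2$ one has $s(\infty)=+\infty$, which lands in the third alternative of \Cref{thm:feller} rather than the first, so the separation between the ``never reaches $0$'' conclusion $\Pr(T=\infty)=1$ and the stronger ``stays bounded away from $0$'' conclusion $\Pr(\inf_{t}h_t>0)=1$ must be drawn carefully, and the role of the strict inequality $s(\infty)<\infty$ made explicit.
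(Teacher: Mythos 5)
The statement you were handed is, in the paper's source, an \emph{empty} \texttt{thm} environment --- a vestigial stub sitting between \Cref{thm:feller} and the incomplete-Gamma material, with no content and no proof of its own. You sensibly read it as the claim it was evidently meant to carry, namely \Cref{lem:unbounded_not_enough}, and your proposal matches the paper's appendix proof of that lemma in all essentials: reduce to Feller's test via \Cref{thm:feller}, observe that the scale function \eqref{eq:scale_fn_def} depends on the coefficients only through the ratio $\tilde{\mu}/\tilde{\sigma}^2 = \gamma h^{-p}$, split into the cases $p = 1$, $p < 1$, $p > 1$, and for $p < 1$ use finiteness of the speed function \eqref{eq:speed_fn_def} at $0$ --- which is exactly where the lower bound on $\tilde{\sigma}$ enters, since the scale function alone cannot see the magnitude of $\tilde{\sigma}$ --- to upgrade accessibility of $0$ to $\Pr(T < \infty) > 0$. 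Your treatment of $p \in [0,1)$ by direct integrability of the scale density (stretched-exponential decay of $s'$ at $\infty$, finite limit of the exponent at $0$) is cleaner than the paper's closed form via the incomplete Gamma function $\Gamma(q^{-1}, \nu(x))$, and reaches the same conclusion.

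One point where you are more careful than the paper, and one where you are less. You correctly flag that at $p = 1$, $\gamma = 1/2$ one has $s(0) = -\infty$ but $s(\infty) = \infty$, so Feller's test lands in the third alternative and yields only $\Pr(T = \infty) = 1$ together with $\Pr(\inf_{t \geq 0} h_t = 0) = 1$, \emph{not} the stronger $\Pr(\inf_{t \geq 0} h_t > 0) = 1$ asserted in part (2) of the lemma; the paper glosses over this. However, the same defect afflicts the entire regime $p > 1$, which you miss: there the exponent $-2\eta(y)$ tends to a finite constant as $y \to \infty$ (since $y^{1-p} \to 0$), so $s'$ tends to a positive constant, $s(\infty) = \infty$ always, and your claim that the stronger statement ``follows from the first alternative'' is unavailable for every $p > 1$, not just at the $p = 1$, $\gamma = 1/2$ borderline. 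The paper's own Case~3 in fact concludes $\Pr(\inf_{t \geq 0} X_t = 0) = 1$, contradicting the lemma statement as written; the first alternative applies precisely when $p = 1$ and $\gamma > 1/2$, and elsewhere in part (2) only $\Pr(T = \infty) = 1$ survives. When you write this up, make that case distinction explicit rather than deferring it to a cautionary remark.
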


Next, we define the incomplete Gamma function $\Gamma(a, x)$ \cite{gautschi1998incomplete,tricomi1950sulla} and state some of its properties.
\begin{defn}{(Incomplete Gamma Function)}
    The incomplete Gamma function $\Gamma(a, x)$ is defined as
    \begin{equation}
        \Gamma(a, x) \coloneqq \int_x^\infty t^{a - 1} e^{-t} \dd{t}.
    \end{equation}
\end{defn}
\begin{lem}{}
    By definition of $\Gamma$,
    \begin{equation} \label{eq:gamma_limit}
        \lim_{x \to \infty} \Gamma(a, x) = 0.
    \end{equation}
    Moreover, for $a > 0$, $\Gamma(a, 0)$ is finite.
\end{lem}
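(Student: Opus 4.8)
The plan is to prove both assertions directly from the integral definition $\Gamma(a,x) = \int_x^\infty t^{a-1}e^{-t}\dd{t}$, since the only analytic content is the decay of the tail of this integral and its convergence at the origin. I would treat the two claims separately, because they are governed by the behaviour of the integrand at opposite ends of the domain: the exponential factor $e^{-t}$ as $t\to\infty$ controls the first claim, whereas the algebraic singularity $t^{a-1}$ as $t\downarrow 0$ controls the second and is where the hypothesis $a>0$ enters.

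For the limit \eqref{eq:gamma_limit}, the key observation is that the polynomial factor is dominated by the exponential at infinity: since $t^{a-1}e^{-t/2}\to 0$ as $t\to\infty$, there is a threshold $T_0$ (depending on $a$) such that $t^{a-1}e^{-t}\le e^{-t/2}$ for all $t\ge T_0$. Then for every $x\ge T_0$ I would bound $0\le \Gamma(a,x)\le \int_x^\infty e^{-t/2}\dd{t} = 2e^{-x/2}$, and let $x\to\infty$; the squeeze yields $\lim_{x\to\infty}\Gamma(a,x)=0$. This step is insensitive to the sign of $a$, as it concerns only the tail of the integral.

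For the finiteness of $\Gamma(a,0)=\int_0^\infty t^{a-1}e^{-t}\dd{t}$ when $a>0$, I would split the integral at $t=1$. On $[1,\infty)$ the same exponential domination used above gives a finite contribution. On $(0,1]$ I would use $e^{-t}\le 1$ to get $t^{a-1}e^{-t}\le t^{a-1}$, whose integral $\int_0^1 t^{a-1}\dd{t} = 1/a$ is finite precisely because $a>0$; for $a\le 0$ the singularity at the origin would be non-integrable. Summing the two finite pieces gives finiteness. There is no substantial obstacle here, as this is a standard property of the (incomplete) Gamma function; the only point requiring care is isolating the role of the hypothesis $a>0$, namely the integrability of the $t^{a-1}$ singularity at $t=0$, which plays no part in the tail limit but is exactly what makes $\Gamma(a,0)$ finite.
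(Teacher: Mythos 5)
Your proof is correct. The paper gives no proof of this lemma at all---it asserts both facts as standard properties of the incomplete Gamma function, immediate from the definition---so your elementary argument (dominating $t^{a-1}e^{-t}$ by $e^{-t/2}$ beyond a threshold to squeeze the tail to zero, and splitting at $t=1$ with $\int_0^1 t^{a-1}\dd{t} = 1/a$ to isolate where $a>0$ is needed) simply supplies, correctly, the routine details the authors took for granted.
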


We now proceed to prove \Cref{lem:unbounded_not_enough}.
\begin{proof}
    For conciseness, let $q \coloneqq 1 - p$ such that $-p = q - 1$, and denote $s(\infty) \coloneqq \lim_{x \to \infty} s(x)$.

    We begin by computing the scale function $s(x)$ for $h$ \eqref{eq:h_sde}.
    Using \eqref{eq:lem_unbounded:ratio}, the inner integral $\int_c^y \frac{\tilde{\mu}(z)}{\tilde{\sigma}^2(z)} \dd{z}$ can be computed as
    \begin{equation}
        \eta(y) \coloneqq \int_c^y \gamma z^{q - 1} \dd{z} = \begin{dcases} \gamma \ln(y / c), & q = 0, \\ \frac{\gamma}{q} \left( y^{q} - c^{q} \right), & q \neq 0. \end{dcases}
    \end{equation}
    We now consider into three cases.

\textbf{Case 1. $q=0$ ($p=1$)}\\
    Computing $s$ using \eqref{eq:scale_fn_def}, we get
    \begin{equation}
        s(x) = \frac{1}{2\gamma - 1} \left(c - \frac{c^{2\gamma}}{x^{2\gamma - 1}}  \right)
    \end{equation}
    Note that
    \begin{align}
        \lim_{x \downarrow 0} s(x) &= \begin{dcases}
            -\infty, &\qquad \gamma > \frac{1}{2}, \\
            -\infty, &\qquad \gamma = \frac{1}{2}, \\
            \frac{c}{2\gamma - 1}, \qquad &\gamma \in (0, 1/2).
        \end{dcases} \\
        \lim_{x \to \infty} s(x) &= \begin{dcases}
            \frac{c}{2\gamma - 1}, &\gamma > \frac{1}{2},  \\
            \infty, &\gamma = \frac{1}{2}, \\
            \infty, &\gamma \in (0, 1/2). \end{dcases}
    \end{align}
    Hence, if $\gamma \geq \frac{1}{2}$, then $\abs{s(0)} = \infty$, and the process $h$ is strictly positive with probability one by \Cref{thm:feller}.

\textbf{Case 2. $q > 0$ ($p < 1$)}\\
    In this case, using \eqref{eq:scale_fn_def} yields
    \begin{align}
        s(x) &= -d \Big( \Gamma\lft( q^{-1}, \nu(x) \rht) - \Gamma\lft( q^{-1}, \nu(c) \rht) \Big), \\
        \nu(x) &\coloneqq -\frac{2 x^q \gamma}{q}, \\
        d &\coloneqq \frac{1}{q} \exp( \nu(c) )\, (2 \gamma / q)^{1/q}.
    \end{align}
    Since $\nu(0) = 0$, and $q^{-1} > 0$, $\Gamma(q^{-1}, \nu(0))$ is finite, and hence $s(0)$ is also finite. On the other hand, since $\nu(\infty) = \infty$, using \eqref{eq:gamma_limit} gives that $s(\infty)$ is also finite. Applying \Cref{thm:feller} thus gives us that, as $t \to \infty$, $h$ hits zero with positive probability.

    We first compute $s'(x)$ as
    \begin{align}
        s'(x) &= \exp(-2 \eta(x)) = \underbrace{\exp\Big(-2 \frac{\gamma}{q} c^q \Big)}_{\coloneqq K_1} \exp\Big( \underbrace{2 \frac{\gamma}{q}}_{K_2} x^q \Big), \\
        &= K_1 \exp(K_2 x^q).
    \end{align}
    Then, the speed function $v$ can be computed as
    \begin{align}
        v(x)
        &= \int_c^x s'(y) \int_c^y \frac{2}{s'(z) \tilde{\sigma}(z)^2} \dd{z} \dd{y}, \\
        &= 2 \int_c^x K_1 e^{K_2 y^q} \int_c^y K_1^{-1} e^{-K_2 z^q} \frac{1}{\tilde{\sigma}(z)^2}\dd{z} \dd{y}, \\
        &= 2\frac{1}{\sigma(z)^2} \int_c^x \int_c^y \exp(K_2 (y^q - z^q)) \frac{1}{\tilde{\sigma}(z)^2}\dd{z} \dd{y}.
    \end{align}
    If $\tilde{\sigma}(z)$ is bounded from below, then the integrand is finite, and $v(0)$ is finite. Hence, by \Cref{thm:feller}, $\Pr(T < \infty) > 0$.

\textbf{Case 3. $q < 0$ ($p > 1$)}\\
    For clarity, define $\epsilon \coloneqq -q > 0$. Then,
    \begin{align}
        s(x)
        &= \int_c^x \exp\lft( \frac{2 \gamma}{\epsilon} \left( y^{-\epsilon} - c^{-\epsilon} \right) \rht) \dd{y} \\
        &= \exp\lft(-\frac{2}{\gamma}{\epsilon} c^{-\epsilon}\rht) \int_c^x \exp\lft( \frac{2 \gamma}{\epsilon} y^{-\epsilon} \rht) \dd{y}
    \end{align}
    We first evaluate $s(\infty)$. Since $\epsilon > 0$, the integrand $\exp(2\gamma/\epsilon y^{-\epsilon})$ is bounded below by $1$. Hence, the integral diverges, i.e.,
    \begin{equation}
        \lim_{x \to \infty} s(x) = \infty.
    \end{equation}
    On the other hand, to evaluate $s(0)$, consider a Taylor expansion of the exponential function:
    \begin{equation}
        \exp\lft( r y^{-\epsilon} \rht) = \sum_{n=0}^\infty \frac{1}{n!} \left(d y^{-\epsilon} \right)^n
    \end{equation}
    where $r \coloneqq \frac{2 \gamma}{\epsilon}$ for conciseness.
    Integrating both sides from $c$ to $x$ yields
    \begin{equation} \label{eq:integrate_series}
        \int_c^x \exp\lft( r y^{-\epsilon} \rht) = \sum_{n=0}^\infty \frac{1}{n!} r^n \frac{1}{1 - n \epsilon} \left( x^{1 - n \epsilon} - c^{1-n\epsilon} \right)
    \end{equation}
    Since $\epsilon > 0$, we have that $1 - n \epsilon < 0$ for $n$ large enough, which causes $x^{1 - n \epsilon}$ to diverge as $x \downarrow 0$. Hence, the integral \eqref{eq:integrate_series}, and thus $s$, diverges to $-\infty$ as $x \downarrow 0$, and we have that
    \begin{equation}
        \lim_{x \downarrow 0} s(x) = -\infty.
    \end{equation}
    Since $s(0) = -\infty$ and $s(\infty) = \infty$, by \Cref{thm:feller},
    \begin{equation}
        \Pr( \inf_{t \geq 0} X_t = 0) = \Pr( \sup_{t \geq 0} X_t = \infty) = \Pr( T = \infty) = 1.
    \end{equation}
\end{proof}

\end{appendix}

\bibliographystyle{plain}        \bibliography{bibliography}

\end{document}